\DeclareMathOperator{\M}{M}
\def \QQ{\mathbb{Q}}
\def \ZZ{\mathbb{Z}}
\def \RR{\mathbb{R}}
\def \CC{\mathbb{C}}
\def\<#1>{{\left\langle{#1}\right\rangle}}
\theoremstyle{plain}
\newtheorem{thm}{Theorem}[section]
\newtheorem*{thm*}{Theorem}
\newtheorem{prop}[thm]{Proposition}
\newtheorem{lemma}[thm]{Lemma}
\theoremstyle{remark}
\newtheorem{remark}[thm]{Remark}
\newtheorem*{rem*}{Remark}
\theoremstyle{definition}
\newtheorem{obs/res}[thm]{Observation}
\begin{document}
\title{An unexpected trace relation of CM points}

\author{Daniel Kohen}

\address{Universit\"at Duisburg-Essen, Fakult\"at f\"ur Mathematik}
\email{daniel.kohen@uni-due.de}
\thanks{DK was supported by an Alexander von Humboldt postdoctoral fellowship}

 \keywords{}
 \subjclass[2010]{Primary: 11F32 , Secondary: 11G05   }

\begin{abstract}
Let $E/\QQ$ be an elliptic curve of conductor $N=p^2M$ where $p$ is an odd prime not dividing $M$. Let $\mathcal{O}_f$ be the order of conductor $f$  (relatively prime to $N$) in an imaginary quadratic field $K$ in which
$p$ is inert and such that the sign of the functional equation of $E/K$ is $-1$. Associated to these data there is a Shimura curve of non-split Cartan level at $p$ and a CM point of conductor $f$ on it. We can also consider a CM point of conductor $pf$ on another Shimura curve,
using a split Cartan level at $p$. These curves admit  parametrizations to $E$ and taking the images of the CM points we obtain points on $E$ defined over $H_f$ and $H_{pf}$ respectively (the ring class fields of conductor $f$ and $pf$). We prove that
these points arising from different Shimura curves satisfy a trace compatibility that is non-trivial if and only if the local sign of $E/\QQ$ at $p$ is $+1$.

\end{abstract}				
\maketitle
		
\section*{Introduction}

Let $E/\QQ$ be an elliptic curve of conductor $p^2$. Since $E$ is modular, there exists a parametrization from the classical modular curve $X_0(p^2)$ to $E$.  Let $K$ be an imaginary quadratic field  
and let $\mathcal{O}_K$ be its ring of integers. If the prime $p$ is split in $\mathcal{O}_K$, the classical theory of Heegner points \cite{MR803364} supplies a point on $X_0(p^2)$ such that its image under the modular parametrization
is $P_1 \in E(H)$, where $H$ is the maximal unramified abelian extension of $K$. Consider $P_K=\text{Tr}_{K}^{H} P_1 \in E(K)$. The celebrated Gross-Zagier formula \cite{MR833192} shows that this point is non-torsion precisely when $L^{'}(E/K,1) \neq 0$.
When  $p$ is inert in $K$ we do not have Heegner points in the modular curve $X_0(p^2)$, however, the non-split Cartan curve  $X_{ns}(p)$  hosts Heegner points and uniformizes $E$. This, together with a generalization
of the Gross-Zagier formula, was carried out by Zhang \cite{MR1868935}. 

In spite of $X_0(p^2)$ not possessing Heegner points of conductor $1$ whenever $p$ is inert, it does have CM points of conductor $p$ defined over $H_p$, the ring class field associated to the order of conductor $p$ inside $\mathcal{O}_K$. Take any such point  $P_{p} \in E(H_p)$. One question arises naturally: is $\text{Tr}_{H}^{H_p} P_p \in E(H)$ non-torsion? 

\bigskip

\begin{thm*} (Special case of Proposition \ref{prop:signo-} and Theorem \ref{prop:main2})

 Let $E/\QQ$ be an elliptic curve of conductor $p^2$ and let $K$ be an imaginary quadratic field such that $p$ is inert in $K$. If the local Atkin-Lehner sign $w_p(E)$ of $E$ at $p$ is $-1$ then $\text{Tr}_{H}^{H_p} P_p$ is zero.
If $w_p(E)=+1$, then $\text{Tr}_{H}^{H_p} P_p \in E(H)$ is non-torsion if and only if $P_{1} \in E(H)$ is non-torsion.
\end{thm*}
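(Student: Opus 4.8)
The plan is to compute $\Tr_{H}^{H_p} P_p$ by describing all the relevant CM points adelically, through optimal embeddings of the orders $\O_K$ and $\O_p = \ZZ + p\O_K$ into $\M_2(\QQ)$, and then to evaluate the resulting Galois-orbit sum by a purely local analysis at $p$, where the sign $w_p(E)$ enters. First I would fix the modular interpretation: since $p$ is inert, the newform $f$ attached to $E$ satisfies $a_p(f)=0$ (as $p^2 \,\|\, N$), while its Atkin--Lehner pseudo-eigenvalue at $p$ equals $w_p(E)=\pm 1$. I would identify $X_0(p^2)$ with the split Cartan curve $X_{\mathrm{split}}(p)$ and realize $P_p=\varphi(y_p)$ as the image of a conductor-$p$ CM point $y_p$ under the parametrization $\varphi\colon X_0(p^2)\to E$; likewise $P_1$ is the image of a conductor-$1$ CM point $y_1$ on the non-split Cartan curve $X_{\mathrm{ns}}(p)$ under its own map to $E$. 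Both families arise from the same embedding $K\hookrightarrow \M_2(\QQ)$, the split and non-split structures differing only in the local datum at $p$, namely the normalizer of a split, respectively non-split, Cartan subgroup of $\GL_2(\FF_p)$.

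Next I would make the Galois action explicit via Shimura's reciprocity law. Because $p$ is inert, $\Gal(H_p/H)\cong (\O_K/p\O_K)^\times/(\ZZ/p\ZZ)^\times$ is cyclic of order $p+1$ (up to the unit index) and acts simply transitively on the Galois orbit $\{y_p^{\sigma}\}$ of $p+1$ conductor-$p$ CM points. The divisor $\sum_{\sigma} y_p^{\sigma}$ is then rational over $H$, and I expect it to be expressible through an explicit Hecke correspondence applied to the conductor-$1$ CM point. Concretely, since $X_{\mathrm{split}}(p)$ and $X_{\mathrm{ns}}(p)$ are both quotients of the full level-$p$ curve $X(p)$, the orbit sum should match, via this common structure, the image of $y_1$ under a correspondence built from $T_p$ and the Atkin--Lehner operator at $p$, so that after applying $\varphi$ one obtains
\begin{equation*}
\Tr_{H}^{H_p} P_p \;=\; c_p \cdot P_1 \qquad\text{in } E(H)\otimes\QQ,
\end{equation*}
where $c_p$ depends only on the local representation $\pi_p$. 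The vanishing of $a_p(f)$ kills the ``diagonal'' contribution of the correspondence, leaving $c_p$ proportional to $1+w_p(E)$.

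Finally I would read off the two cases. When $w_p(E)=-1$ the factor $1+w_p(E)$ vanishes, so $\Tr_{H}^{H_p} P_p$ is torsion; a closer inspection of the $H$-rational divisor (it is principal, hence maps to the identity) shows it is in fact the zero point. When $w_p(E)=+1$ the factor is nonzero, and since $c_p P_1$ then differs from $P_1$ by a nonzero scalar in $E(H)\otimes\QQ$, the trace is non-torsion precisely when $P_1$ is.

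The main obstacle is the third step: establishing the exact trace relation that links the conductor-$p$ CM divisor on the split Cartan curve $X_0(p^2)$ to the conductor-$1$ point on the non-split Cartan curve, with the correct Hecke and Atkin--Lehner operators and the correct combinatorial coefficients. This is genuinely a local statement at $p$ comparing the two Cartan structures against the non-split torus $K_p^\times=\QQ_{p^2}^\times$, and the cleanest route is through the local representation theory of $\pi_p$ (a ramified principal series or a supercuspidal), in which the factor $1+w_p(E)$ emerges as the value of the relevant local toric period, nonzero exactly when the local sign is $+1$.
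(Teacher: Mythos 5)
Your skeleton (adelic CM points from a single embedding optimal for both $\mathcal{O}_K$ and $\mathcal{O}_p$, the simply transitive action of the order-$(p+1)$ group $\Gal(H_p/H)$, a trace identity linking the two curves, and the dichotomy governed by $w_p(E)$) matches the paper, and you correctly flag the trace identity as the crux. But that crux is left as an expectation, and the mechanism you propose for it would fail. There is no correspondence ``built from $T_p$ and the Atkin--Lehner operator'' carrying $X_{ns}(p)$ to $X_0(p^2)$: those operators act on $X_0(p^2)$ alone and never leave that curve. The object actually needed is the double coset operator $\Gamma_{s+}\cdot 1\cdot \Gamma_{ns+}$ between the \emph{normalizer} curves; the fact that it induces an isogeny $J_{ns+}\to J^{p\text{-new}}_{s+}$ is Chen's theorem, a genuinely nontrivial input your sketch does not supply, and the identity $2(\Gamma_{s+}\cdot 1\cdot \Gamma_{ns+})Q_1=\sum_{\sigma}Q_p^{\sigma}$ is exactly the paper's main computation (Proposition \ref{prop:main}), done by strong approximation and explicit Cartan/normalizer combinatorics: the correspondence has degree $(p+1)/2$ and each coset is hit exactly twice by the $p+1$ Galois translates. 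Moreover, your fallback route via a local toric period cannot close the gap: Waldspurger/Tunnell--Saito-type nonvanishing compares heights or linear functionals, not points, so it cannot by itself yield $\Tr_{H}^{H_p}P_p=c_p P_1$ in $E(H)\otimes\QQ$. To know the trace is proportional to $P_1$ at all, one needs the multiplicity-one statement that $\Hom(J_{ns+},E)\otimes_{\ZZ}\QQ$ is one-dimensional, which is precisely how the paper concludes (Theorem \ref{prop:main2}) --- and, tellingly, the paper never computes $c_p$; it only needs $\Phi_{s+}\circ\Gamma$ to be nonzero, which follows because $\Gamma$ is an isogeny.

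Two further points. First, your plan works on $X_0(p^2)\cong X_s(p)$ itself, but at bare Cartan level the double coset $\Gamma_{s}\cdot 1\cdot \Gamma_{ns}$ is provably only equivariant for $T_\ell$ with $\ell$ a square mod $p$ and need not give an isogeny; passing to the normalizer curves $X_{ns+},X_{s+}$ is forced, and is exactly where the hypothesis $w_p(E)=+1$ enters (the parametrization $\Phi_{s+}$ only descends to the quotient when the Atkin--Lehner sign is $+1$). Your proposal does not engage with this, and the heuristic $c_p\propto 1+w_p(E)$, while consistent with the answer, is not derived. Second, in the case $w_p(E)=-1$ your upgrade from torsion to exact vanishing (``the divisor is principal, hence maps to the identity'') is unjustified --- nothing in your argument shows principality. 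The paper instead proves exact vanishing directly (Proposition \ref{prop:signo-}): the unique order-$2$ element $\mathfrak{b}_0$ of $\Gal(H_p/H)$ acts on the CM point through the Atkin--Lehner involution at $p$ (Lemma \ref{lem:elementary}), so pairing the Galois orbit as $\{\mathfrak{b},\mathfrak{b}\mathfrak{b}_0\}$ makes the contributions cancel exactly in pairs, with no $L$-function or period input at all.
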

The theme of Heegner points constructions for more general orders (including non-split Cartan orders) was also studied in \cite{MR3766864}, \cite{MR3484373}, \cite{KP} and \cite{MR3815284}. 
An immediate application  of the present article is that, if $w_p(E)=+1$, we can explicitly compute Heegner points associated to imaginary quadratic fields in which $p$ is inert in a  much more efficient way than \cite{MR3484373},
 as we can avoid working with a non-split Cartan curve and computing the explicit modular parametrization to the elliptic curve $E$.

One of the main features of Heegner points (or CM points in general) is that they come in families satisfying trace compatibilities (see for example \cite[Section 2.4]{B-D1},\cite[Section 6]{Cornut-Vatsal},  \cite[Proposition 3.10]{MR2020572},\cite[Proposition 3.7]{MR1110395},\cite[Proposition 4.8]{MR2392363}).
These compatibilities are a crucial tool in order to bound Selmer groups  and construct $p$-adic $L$-functions. 

Cornut and Vatsal introduced the notion of \emph{good} CM points which are essentially the CM points that satisfy such compatibility relations. Our CM points of conductor $p$ are not \emph{good} (they are of type III in the sense \cite[Section 6.4]{Cornut-Vatsal}), thus the main interest of this paper is that
it provides an (unexpected!) trace relation of  CM points  living in two distinct modular curves. A parallel of this situation is the work of Bertolini and Darmon \cite{B-D1} that relies
in understanding the relation between CM points in distinct  curves. Their situation is deeper and more delicate since there is a sign change phenomenon and they have to study CM points living in Shimura curves with different ramification sets.

The modular curve $X_0(p^2)$ is isomorphic to $X_s(p)$, the modular curve associated to a split Cartan group modulo $p$. The new part of its Jacobian is isogenous over $\QQ$ (in a manner compatible with the Hecke operators) to the Jacobian of the curve $X_{ns}(p)$, providing a modular parametrization from 
$X_{ns}(p)$ to $E$. This was first proved by
Chen \cite{Chen1} comparing the traces of the Hecke operators for these curves. Afterwards, de Smit and Edixhoven \cite{MR1764312} gave another (more geometric) argument using the representation theory of $\text{GL}_2(\mathbb{F}_p)$. The result was later reproved in the aforementioned paper of Zhang 
resorting to the theory of automorphic representations. We would also like to mention the related nice article  \cite{MR3749283} which was kindly brought to our attention by Benedict Gross. The paper focus on the action of $\text{SL}_2(\mathbb{F}_p)/{\pm 1}$ on the regular differentials on $X(p)$ and its relation
with Hecke's subspace associated to forms with complex multiplication by $\QQ(\sqrt{(\frac{-1}{p})p})$.

If $w_p(E)=-1$ the proof that the trace of the point of conductor $p$ is $0$ (Proposition \ref{prop:signo-}) follows easily by studying the interplay between the Galois action and the Atkin-Lehner involution.

Suppose now that $w_p(E)=+1$. In that case, we can work with the normalizers of the Cartan groups (both split and non-split), obtaining the isogenous Jacobians $J_{ns+}(p)$ and $J^{\text{new}}_{s+}(p)$. Chen \cite{Chen2} was able to provide an explicit isogeny between these two Jacobians
(the other proofs mentioned where non-constructive). The isogeny is given in very simple terms as a double coset operator, that can be thought as a trace between the modular curves. The isogeny was also illustrated, using a nice combinatorial description of the moduli interpretation of the non-split Cartan curve,
by Rebolledo and Wuthrich \cite{MR3784056}. In the recent paper \cite{Chen3} Chen and Salari Sharif also gave a simpler proof and even showed an explicit isogeny between  $J_{ns}(p)$ and $J^{\text{new}}_{s}(p)$ (unluckily, the explicit isogeny is less nice, as it is given by a linear combination of several double coset operators).

The results of this paper boil down to prove that, when $w_{p}(E)=+1$, the  double coset operator giving Chen's isogeny applied to the CM point on the non-split Cartan curve essentially gives the trace of the CM point of conductor $p$ on the split Cartan curve (Proposition \ref{prop:main}). In addition, we prove
that the isogeny is equivariant for the Hecke operators (Proposition \ref{prop:equivariant}). Combining these results we prove the main result of the article (Theorem \ref{prop:main2}).

 Although the reader should have in mind this case for intuition, we will actually prove a more general result. Namely, we will work with an elliptic curve of conductor $N=p^2M$ where $p$
is an odd prime number that does not divide $M$,  and  with $K$  an imaginary quadratic field such that $p$ is inert in $K$, the sign of $E/K$ is $-1$ and such that if $q$ is prime and $q^2 \mid M$, then $q$ is unramified in $K$. Under these hypotheses, there exists an embedding of
$K$ into a certain indefinite quaternion algebra $B$ and we consider Shimura curves $X_{ns}$ and $X_s$ by choosing certain orders inside $B$ that, locally at $p$, look like the non-split and split Cartan orders respectively. 
For $f$  relatively prime to $N$ we consider  points $P_f \in E(H_f), P_{pf} \in  E(H_{pf})$ obtained as the images of specific special points under the parametrizations from $X_{ns}$ and $X_s$  to $E$ respectively. 
We prove that if $w_{p}(E)=-1$, then $\text{Tr}_{H_f}^{H_{pf}} P_{pf}=0$ (Proposition \ref{prop:signo-}). Conversely, if $w_{p}(E)=+1$,  we prove that the point
  $\text{Tr}_{H_f}^{H_{pf}} P_{pf} \in E(H_f)$ is non-torsion if and only if $P_f \in E(H_f)$ is non-torsion (Theorem \ref{prop:main2}).

\qquad

\textbf{Acknowledgments}: I would like to thank Ariel Pacetti and Matteo Tamiozzo for their helpful comments and discussions.

\section*{Notation}
\begin{itemize}
 \item Let $p$ be an odd prime number and let $\varepsilon$  be a non-square modulo $p$.  By $\overline{x}$ we will denote the reduction modulo $p$ of $x$, whenever it makes sense.

 \item We define the non-split Cartan order as 
 \[ M_{ns}= \left\{ \left( \begin{array}{ccc}
a & b \\
c & d  \\ \end{array} \right) \ \in \\M_{2}(\ZZ_p) : a \equiv 
d , b\varepsilon \equiv  c \bmod{p}   \right\}.\]

We also define \[M_{ns+}=M_{ns} \cup  \left\{ \left( \begin{array}{ccc}
a & b \\
c & d  \\ \end{array} \right) \ \in \\M_{2}(\ZZ_p) : a \equiv 
-d , -b\varepsilon \equiv c  \bmod{p}   \right\}. \]
 
%
\item We define the split Cartan order as \[M_{s}= \left\{ \left( \begin{array}{ccc}
a & b \\
c & d  \\ \end{array} \right) \ \in \\M_{2}(\ZZ_p) :  b \equiv  c \equiv 0 \bmod{p}   \right\},\]
and we consider \[M_{s+}=M_{s} \cup  \left\{ \left( \begin{array}{ccc}
a & b \\
c & d  \\ \end{array} \right) \ \in \\M_{2}(\ZZ_p) : a \equiv 
d \equiv 0  \bmod{p}   \right\}.\]

\item For $? \in \left\lbrace ns,ns+,s,s+ \right\rbrace$ we denote the corresponding Cartan group 

\[ C_{?}= \left\lbrace \overline{M} : M  \in M^{\times}_{?}  \right\rbrace \subset \text{GL}_{2}(\mathbb{F}_p).\]
The group $C_{ns}$ is a commutative subgroup of $\text{GL}_{2}(\mathbb{F}_p)$ isomorphic to $\mathbb{F}^{\times}_{p^2}$.
The group $C_{ns+}$ (resp. $C_{s+}$) is the normalizer of $C_{ns}$  (resp. $C_s$) and the quotients $C_{ns+}/C_{ns}, C_{s+}/C_{s}$ are both of size $2$.

\item Given a $\ZZ$-module $A$ and a prime $v$, we set $A_v=A \otimes \ZZ_v$.  Also, set $\widehat{\ZZ}= \prod_{v} \ZZ_{v}$ and $\widehat{A}= A \otimes \widehat{\ZZ}$.

\item Given a quaternion algebra $B$ (respectively an  order $R \subset B$) we denote by $B^{1}$ (resp. $R^{1}$) the elements of $B$ (resp. $R$) of reduced norm $1$.

\end{itemize}

\section{Optimal embeddings of Cartan orders}
Let $E /\QQ$ be an elliptic curve of  conductor $N=p^2 M$, where $p$ is an odd prime relatively prime to $M$. 
Let $K$ be an imaginary quadratic field such that the sign $\epsilon(E/K)$ of the functional equation of $E/K$ is equal to $-1$
and the prime $p$ is \emph{inert} in $K$. For simplicity we assume that if  $q$ is prime and $q^2 \mid M$, then $q$ is unramified in $K$.
  The sign $\epsilon(E/K)$ decomposes as the product of local signs $\epsilon_v(E/K) \in \pm 1$. Let $\eta=\prod_v \eta_v$ be the character associated to $K$ via class field theory. For each place $v$ let $\epsilon(\mathbb{B}_v)$ be the unique sign such that
\[ \epsilon_v(E/K)=\eta_v(-1)\epsilon(\mathbb{B}_v).\]

Let $S$ be the set of places such that $\epsilon(\mathbb{B}_v)=-1$. Since $\epsilon(E/K)=-1$, $S$ has odd cardinality.
 Moreover, because $K$ is imaginary, $\infty \in S$ \cite[Proposition 6.5]{MR970123}.  Let $B/\QQ$ be the unique quaternion algebra  with ramification set $S-\left\lbrace \infty \right\rbrace$.
Let $f$ be a positive integer relatively prime to $N$ and let $\mathcal{O}_f$ be the unique order of conductor $f$ inside
 $\mathcal{O}_K$, the ring of integers of $K$. 

\begin{prop}
There exists an order $R \subset B$ of discriminant $M$  and an embedding  $\iota_0: K \rightarrow B$ such that 

\[\iota_0(K) \cap R = \iota_0(\mathcal{O}_f). \]

\end{prop}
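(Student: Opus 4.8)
The plan is to reformulate the statement as the existence of an optimal embedding and then to build it locally. Recall that an embedding $\iota_0\colon K\to B$ is an \emph{optimal} embedding of $\mathcal{O}_f$ into $R$ precisely when $\iota_0(K)\cap R=\iota_0(\mathcal{O}_f)$, so the assertion is that some order $R$ of reduced discriminant $M$ admits an optimal embedding of $\mathcal{O}_f$. Since $B$ is ramified only on $S\smallsetminus\{\infty\}$ it is indefinite and satisfies the Eichler condition; by the local--global principle for optimal embeddings into Eichler orders it therefore suffices to produce, at each finite place $v$, a $\ZZ_v$-order $R_v\subset B_v$ of reduced discriminant $v^{\ord_v(M)}$ together with a local optimal embedding $(\mathcal{O}_f)_v\hookrightarrow R_v$. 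Strong approximation in $B^1$ (again using that $B$ is indefinite) then glues these into a single global Eichler order $R$ of reduced discriminant $M$ carrying a global optimal embedding of $\mathcal{O}_f$, which is the desired $\iota_0$.

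Before constructing the local data I would pin down the ramification of $B$, i.e.\ the finite part of $S$, using the decomposition $\epsilon_v(E/K)=\eta_v(-1)\epsilon(\mathbb{B}_v)$ together with the factorization of root numbers under base change, $\epsilon_v(E/K)=\epsilon_v(E/\QQ)\,\epsilon_v(E_{\eta}/\QQ)$. If $v$ is split in $K$ then $\eta_v(-1)=+1$ and $\epsilon_v(E/K)=\epsilon_v(E/\QQ)^2=+1$, so $v\notin S$. If $E$ has good reduction at $v$ (in particular for all $v\nmid N$) then the twist $E_{\eta}$ has local root number $\eta_v(-1)$, whence $\epsilon_v(E/K)=\eta_v(-1)$ and again $v\notin S$. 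Finally $p\notin S$: since $p$ is inert we have $\eta_p(-1)=+1$, and because the conductor exponent of $E$ at $p$ equals $2$ the unramified quadratic twist preserves the local root number, so $\epsilon_p(E/K)=w_p(E)^2=+1$. Consequently the finite part of $S$ is contained in $\{v:v\mid M\}$ and, by the first observation, consists of places non-split in $K$; in particular $K$ embeds into $B$.

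With the embedding available, the local construction is routine away from $M$. At $v\nmid Mf$, which includes $v=p$, the order $(\mathcal{O}_f)_v=(\mathcal{O}_K)_v$ is maximal and $B_v\cong\M_2(\QQ_v)$ is split, so I take $R_v$ maximal and use the standard optimal embedding of a maximal quadratic order into $\M_2(\ZZ_v)$; at $p$ this is the embedding of the unramified quadratic order, reflecting the inertness of $p$. At $v\mid f$ (automatically prime to $Mp$) one again has $B_v$ split, $R_v=\M_2(\ZZ_v)$, and an optimal embedding of the order of conductor $v^{\ord_v(f)}$ into the maximal order. At $v\mid M$ ramified in $B$ I take $R_v$ to be the unique maximal order of the division algebra $B_v$, of reduced discriminant $v$, which receives an optimal embedding of $(\mathcal{O}_K)_v=(\mathcal{O}_f)_v$ since $v$ is non-split in $K$; at $v\mid M$ split in $B$ I take an Eichler order of level $v^{\ord_v(M)}$ adapted to the image of $(\mathcal{O}_f)_v$.

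The main obstacle is the discriminant bookkeeping at the primes dividing $M$: a place ramified in $B$ contributes only $v^1$ to the reduced discriminant of an Eichler order, so $\Disc(R)=M$ forces every prime $q$ with $q^2\mid M$ to lie outside $S$. This is exactly where the standing hypothesis is used. If $q^2\mid M$ then by assumption $q$ is unramified in $K$; if moreover $q$ splits in $K$ then $q\notin S$ by the first observation, while if $q$ is inert then $\eta_q(-1)=+1$ and, the conductor exponent at $q$ being even, the unramified-twist computation gives $\epsilon_q(E/K)=+1$ and again $q\notin S$. Establishing this last local root-number identity is the only delicate point; granting it, the reduced discriminants match place by place, the local optimal embeddings exist at every $v$, and the local--global principle delivers the global order $R$ of discriminant $M$ and the optimal embedding $\iota_0$.
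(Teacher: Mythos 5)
Your global strategy (local--global principle plus strong approximation under the Eichler condition) is the right skeleton, and your root-number computations correctly identify that $p\notin S$ and that a prime $q$ with $q^2\mid M$ and $q$ inert in $K$ also satisfies $q\notin S$. But precisely this last case exposes a genuine gap: your framework is built entirely around \emph{Eichler} orders, and at a prime $q$ with $q^2\mid M$ and $q$ inert in $K$ (a configuration the paper's hypotheses explicitly allow --- ``unramified'' means split \emph{or} inert) no Eichler order of level $q^{\operatorname{ord}_q(M)}\geq q^2$ in the split algebra $B_q\cong M_2(\QQ_q)$ admits any embedding of $(\mathcal{O}_f)_q=(\mathcal{O}_K)_q$ at all, let alone an optimal one. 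Indeed, for $e\geq 1$ the semisimple quotient of the Eichler order $R(q^e)$ is $\mathbb{F}_q\times\mathbb{F}_q$, so an embedding would induce a unital ring homomorphism from the unramified quadratic order onto $\mathbb{F}_q$, whose kernel would be a maximal ideal with residue field $\mathbb{F}_q$; but the unique maximal ideal of that local order has residue field $\mathbb{F}_{q^2}$. So your step ``at $v\mid M$ split in $B$ I take an Eichler order of level $v^{\operatorname{ord}_v(M)}$ adapted to the image of $(\mathcal{O}_f)_v$'' fails exactly where the proof is delicate, and your own sign computation (showing $q\notin S$, hence $B_q$ split) makes the failure unavoidable rather than repairable by moving $q$ into the ramification set.

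The paper's proof avoids this by citing Gross's Propositions 3.2 and 3.4, which construct \emph{non-Eichler} local orders of the form $\mathcal{O}_{K,v}+I\cdot j$ (for inert $v$ with even discriminant exponent these are non-split-Cartan-type orders, exactly like the order $M_{ns}$ of discriminant $p^2$ used later in the paper), together with the globalization argument of Gross's Section 9, which is a genus-theoretic gluing not restricted to Eichler orders. Your gluing step can in fact be salvaged --- the existence half of the local--global principle for optimal embeddings works for an arbitrary genus of orders under the Eichler condition --- but as written your local input is wrong at the inert primes with $q^2\mid M$, so the proposal does not prove the statement. (A smaller instance of the same oversight: at $v\,\|\,M$ inert in $K$ you would face the same obstruction if $v\notin S$; there it happens that the sign analysis forces $\epsilon(\mathbb{B}_v)=-1$, i.e.\ $v\in S$, but you did not verify this either.)
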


\begin{proof}
The existence of local orders and embeddings satisfying the required property is proven in \cite[Propositions 3.2, 3.4]{MR970123}.
The arguments of [Ibid. Section 9] show that we can find a global order with the required properties.
\end{proof}

\begin{remark}
Since $p^2$ divides $N$ exactly, $p \notin S$   \cite[Proposition 6.3 (2)]{MR970123}, and thus $B_{p} \simeq M_{2}(\QQ_p)$. As $R$ has discriminant relatively prime to $p$ 
we can (and do) assume that 
the local maximal order $R_p$ is $M_{2}(\ZZ_p)$. 
\end{remark}

Our next goal is to change the embedding $\iota_0$ in a fashion more suitable for our computations.

For  $? \in \left\lbrace ns,ns+,s,s+ \right\rbrace$ we define $R_{?}= \left\{ x \in R : x_p \in M_{?} \right\}$. The orders $R_{ns}$ and $R_{s}$ are both of discriminant
 $N=p^2M$. We have the following proposition.

\begin{prop} \label{prop:optimal}
There exists an embedding  $\iota: K \rightarrow B$  such that:

\begin{enumerate}
 \item The order $\mathcal{O}_{f}$   embeds optimally into $R_{ns}$, that is,
  \[\iota(K) \cap R_{ns} = \iota(\mathcal{O}_f). \]
  \item The order $\mathcal{O}_{pf}$  embeds optimally into $R_{s}$, that is,
  \[ \iota(K) \cap R_{s} = \iota(\mathcal{O}_{pf}). \]
\end{enumerate}

\end{prop}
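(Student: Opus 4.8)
The plan is to reduce the statement to a purely local assertion at $p$ and to solve it there by an explicit embedding, and then to globalize. Since $f$ is prime to $N$ (so $p \nmid f$), at every finite place $v \neq p$ one has $(\mathcal{O}_f)_v = (\mathcal{O}_{pf})_v$ and $R_{ns,v} = R_{s,v} = R_v$, while at $p$ the order $\mathcal{O}_f$ is maximal. Both equalities $\iota(K)\cap R_{ns} = \iota(\mathcal{O}_f)$ and $\iota(K)\cap R_{s} = \iota(\mathcal{O}_{pf})$ are equalities of $\ZZ$-lattices inside $\iota(K)$, hence by the local--global principle they may be checked place by place. Away from $p$ they hold automatically for the embedding $\iota_0$ furnished by the preceding proposition, since $\iota_0(K)\cap R = \iota_0(\mathcal{O}_f)$ localizes to $\iota_0(K_v)\cap R_v = \iota_0((\mathcal{O}_f)_v)$. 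Thus the entire content is to produce the correct local embedding at $p$ and to install it globally without disturbing the other places.

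For the local model, recall that since $p$ is inert in $K$ the completion $K_p$ is the unramified quadratic extension of $\QQ_p$; as $\varepsilon$ is a non-square unit we may write $\mathcal{O}_{K_p} = \ZZ_p[\omega]$ with $\omega^2 = \varepsilon$, and then $(\mathcal{O}_f)_p = \mathcal{O}_{K_p}$ while $(\mathcal{O}_{pf})_p = \ZZ_p + p\mathcal{O}_{K_p}$. I would define the local embedding $\iota_p \colon K_p \to M_2(\QQ_p) = B_p$ by
\[ \iota_p(a+b\omega) = \begin{pmatrix} a & b \\ \varepsilon b & a \end{pmatrix}, \qquad a,b \in \QQ_p. \]
A matrix of this shape lies in $M_2(\ZZ_p)$ iff $a,b \in \ZZ_p$, so $\iota_p(K_p)\cap M_2(\ZZ_p)=\iota_p(\mathcal{O}_{K_p})$; it moreover satisfies by construction the congruences $a\equiv d$, $\varepsilon b \equiv c$ defining $M_{ns}$, whence $\iota_p(\mathcal{O}_{K_p})\subset M_{ns}$ and therefore $\iota_p(K_p)\cap M_{ns} = \iota_p(\mathcal{O}_{K_p})$. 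For the split order one checks directly that such a matrix lies in $M_s$ iff $a\in\ZZ_p$ and $b\in p\ZZ_p$, i.e. iff $a+b\omega\in \ZZ_p+p\mathcal{O}_{K_p}$, so that $\iota_p(K_p)\cap M_s = \iota_p((\mathcal{O}_{pf})_p)$. The crucial point is that one and the same embedding realizes both optimal embeddings simultaneously: in the non-split Cartan model the elements becoming diagonal modulo $p$ are precisely those of the suborder of conductor $p$.

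It then remains to promote $\iota_p$ to a global embedding agreeing with $\iota_0$ away from $p$. By Skolem--Noether the localizations $\iota_p$ and $(\iota_0)_p$ are conjugate in $B_p^\times = \GL_2(\QQ_p)$; since both send the maximal order $\mathcal{O}_{K_p}$ optimally into $M_2(\ZZ_p)$, with reductions modulo $p$ giving conjugate non-split Cartan subgroups, the local theory of optimal embeddings lets the conjugator be taken in $\GL_2(\ZZ_p)$ (up to center). Exactly as in the globalization step of the preceding proposition, following \cite[Section 9]{MR970123}, strong approximation for the indefinite quaternion algebra $B$ yields an element $b \in B^\times$ whose $p$-component induces this conjugation and with $b_v \in R_v^\times$ for every finite $v \neq p$. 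Setting $\iota = b\,\iota_0\,b^{-1}$ leaves the optimal embedding into $R_v$ unchanged for all $v \neq p$ (as $b_v$ normalizes $R_v$) and reproduces $\iota_p$ at $p$, which combined with the local computation gives both $(1)$ and $(2)$. I expect the globalization to be the only real obstacle: the local model is forced and elementary, but matching it to the fixed $\varepsilon$-Cartan globally while simultaneously preserving optimality at every other place is precisely what requires the indefiniteness of $B$ and strong approximation.
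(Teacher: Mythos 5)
Your local analysis at $p$ is correct and, modulo packaging, is the same as the paper's: the paper likewise reduces everything to the place $p$, conjugates the reduction of $\iota_0(\omega_f)_p$ into $C_{ns}$ (two matrices over $\FF_p$ with the same characteristic polynomial with simple roots are conjugate), and proves statement $(2)$ by exactly the entry computation you carry out in your explicit model, using that $p$ inert forces the off-diagonal entries to be $p$-adic units. The genuine gap is in your globalization step. Strong approximation holds for the norm-one group $B^1$ of an indefinite quaternion algebra, not for $B^{\times}$, and your parenthetical ``up to center'' cannot repair the discrepancy: central elements have square reduced norm, so if your $\GL_2(\ZZ_p)$-conjugator $g$ has $\det g$ a non-square unit modulo $p$, no central adjustment brings it into $\SL_2(\ZZ_p)$. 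Worse, the element you assert exists cannot exist as stated in that case: any $b \in B^{\times}$ with $b_v \in R_v^{\times}$ for every finite $v \neq p$ and $b_p \in \GL_2(\ZZ_p)$ is a unit of a global order, hence has reduced norm $\pm 1$, so $\det(b_p) \equiv \pm 1 \bmod p$. The repair is to adjust $g$ by the \emph{centralizer} of the embedded torus rather than the center: replacing $g$ by $\iota_p(u)g$ with $u \in \mathcal{O}_{K_p}^{\times}$ induces the same conjugation of embeddings, and since $K_p/\QQ_p$ is unramified the norm map $\Nm \colon \mathcal{O}_{K_p}^{\times} \to \ZZ_p^{\times}$ is surjective, so one can choose $u$ with $\Nm(u) = (\det g)^{-1}$ and land in $\SL_2(\ZZ_p)$. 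This is precisely the paper's key move, executed modulo $p$: the conjugator is normalized inside $\SL_2(\FF_p)$ using that $\det \colon C_{ns} \to \FF_p^{\times}$ is surjective and $C_{ns}$ is commutative, and is then lifted \emph{exactly} to $\gamma \in R^1$ via the surjection $R^1 \twoheadrightarrow \SL_2(\FF_p)$, which is the correct invocation of strong approximation.

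A secondary, smaller inaccuracy: even after the norm is fixed, strong approximation only produces $b \in B^1$ whose component $b_p$ \emph{approximates} the local conjugator, not one that induces it on the nose. Your argument goes through, but only after observing that both optimality conditions at $p$ depend solely on data modulo $p$ (your own computation shows this: optimality into $M_2(\ZZ_p)$ is $\GL_2(\ZZ_p)$-stable, membership in $M_{ns}$ is a mod-$p$ condition, and the proof of $(2)$ uses only $p \nmid bc$), so that matching the conjugator modulo $1+pM_2(\ZZ_p)$ suffices. The paper sidesteps this entirely by constructing the conjugator in $\SL_2(\FF_p)$ and lifting it exactly. In short: your model embedding and the local--global reduction are fine and equivalent to the paper's, but the one non-formal point of the whole proof — the reduced-norm bookkeeping needed before strong approximation applies — is exactly the point your write-up glosses over, and as literally written that step fails.
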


\begin{proof}
Let $\omega_f \in \mathcal{O}_K$ be such that $\mathcal{O}_f= \ZZ + \omega_f \ZZ$.
Consider its characteristic polynomial  $\chi_{\omega_f}=X^2-tX+n$ of discriminant $D=t^2-4n$. Embedding $B$ into $B_p=M_{2}(\QQ_p)$ we get
\[ \iota_0(\omega_f)_p= \left(\begin{smallmatrix}a_0&b_0\\c_0&d_0\end{smallmatrix}\right)=A_0 \in M_{2}(\ZZ_p), \] 
where the characteristic polynomial of $A_0$ is equal to $\overline{\chi_{\omega_f}}$.
Since $p$ is inert in $K$, $D$ is not a square modulo $p$ and the reduction modulo $p$ of $A_0$  is conjugate (in $\text{GL}_{2}(\mathbb{F}_p)$) to
\[ \left(\begin{smallmatrix}  t/2&  \sqrt{D/4\varepsilon}\\ \varepsilon  \sqrt{D/4\varepsilon} & t/2\end{smallmatrix}\right) \in C_{ns},\] since both matrices have the same characteristic polynomial (with simple roots).
In addition, because  $\text{det}: C_{ns}  \rightarrow \mathbb{F}^{\times}_{p}$ is surjective and $C_{ns}$ is commutative, 
they can be conjugated by a matrix $\overline{\gamma} \in \text{SL}_{2}(\mathbb{F}_p)$.  As a consequence of strong approximation,
the map $R^{1} \rightarrow \left({R_p/ pR_p} \right) ^{1} \simeq \text{SL}_{2}(\mathbb{F}_p)$ is surjective \cite[Corollary 28.4.11]{Voight}, where the superscript $1$ denotes the elements of reduced norm $1$.
Lastly, we take a lifting $\gamma \in R^{1}$  of $\overline{\gamma}$  and we conjugate $\iota_0$ by $\gamma$ to obtain the desired $\iota$.

Statement $(1)$ is clear from the construction. For statement $(2)$, note that we only changed the order at $p$, so we just need to check locally at $p$.
 Let $\iota(\omega_f)_p=\left(\begin{smallmatrix}a&b\\c&d\end{smallmatrix}\right) \in (R_{ns})_p$. The local order $(\mathcal{O}_{pf})_p$ is equal to $\ZZ_p + pw_{f} \ZZ_p$ and thus  $\iota((\mathcal{O}_{pf})_p) \subseteq (R_s)_p$. For the other
 inclusion, note that since $p$ is inert in $K$ we must have that $p$ does not divide $bc$. Consequently, if we take an element $(\iota(x_{1}+x_{2}pw_{f}))_p \in (R_s)_p$ with $x_1,x_2 \in \QQ_p$, looking at the $(2,1)$ entry we obtain $x_{2} \in \ZZ_p$ and  looking at the $(1,1)$-entry we get $x_1 \in \ZZ_p$, as we wanted.
 \end{proof}

\section{Shimura curves}

For $? \in \left\lbrace ns,ns+,s,s+ \right\rbrace$ consider the (open)  Shimura curve whose complex points are given by the double  quotient

\[ Y_{?}(\CC)= B^{\times} \backslash (\CC-\RR) \times \widehat{B}^{\times}  / \widehat{R_{?}}^{\times}, \]

where $B^{\times}$ acts on $\CC-\RR$ via the action of $B_{\infty} \simeq GL_{2}(\mathbb{R})$ by Möbius transformations and by left multiplication on $\widehat{B}^{\times}$ and $\widehat{R_{?}}^{\times}$ acts trivially on $\CC-\RR$ and by right multiplication on $\widehat{B}^{\times}$.  For  $z \in \CC - \RR$ and $\widehat{b} \in   \widehat{B}^{\times}$  we will denote by $[z,\hat{b}]$ the class of an element in this double quotient.

We consider its compactification $X_{?}(\CC)$, given by

\[X_{?}(\CC)=Y_{?}(\CC) \cup  \left\lbrace \text{cusps} \right\rbrace. \]

The set of cusps is empty if $B \neq M_{2}(\QQ)$ and otherwise equal to the finite set
\[ \text{GL}_{2}(\QQ) \backslash \mathbb{P}^{1}(\QQ) \times \text{GL}_{2}(\widehat{\QQ}) / \widehat{R_{?}}^{\times},\]
where the action of $\text{GL}_{2}(\QQ)$ on $\mathbb{P}^{1}(\QQ)$ is given by M\"obius transformations. These curves admit the following, more classical, description.

\begin{prop}
Let $ \Gamma_{?}$ be the congruence subgroup defined as $\Gamma_{?}= \widehat{R_{?}}^{\times} \cap B^{1}$. Let $\mathcal{H^*}$ be the union of the upper half plane $\mathcal{H}$ and the set of cusps. Then the map

\[   \Gamma_{?} \backslash \mathcal{H^*}  \rightarrow X_{?}(\CC),  \, \,    [z] \mapsto [z,1], \]
is an isomorphism.

\end{prop}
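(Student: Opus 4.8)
The plan is to recognize the statement as the standard dictionary between the adelic model $Y_{?}(\CC)$ and the classical quotient $\Gamma_{?}\backslash\mathcal{H}$ of a quaternionic Shimura curve, whose only substantial input is strong approximation for the norm-one group $B^{1}$. Since $B$ is indefinite (split at $\infty$), strong approximation \cite{Voight} applies, and the Hasse--Schilling norm theorem gives $\Nm(B^{\times})=\QQ^{\times}$; moreover a local computation at each finite place (trivial away from $p$, and at $p$ using that the determinant already surjects on the residue field for the Cartan orders) shows $\Nm(\widehat{R_{?}}^{\times})=\widehat{\ZZ}^{\times}$. Passing to reduced norms then identifies the finite double coset space $B^{\times}\backslash\widehat{B}^{\times}/\widehat{R_{?}}^{\times}$ with $\QQ^{\times}\backslash\widehat{\QQ}^{\times}/\widehat{\ZZ}^{\times}$, which is trivial because $\widehat{\QQ}^{\times}=\QQ^{\times}_{>0}\times\widehat{\ZZ}^{\times}$. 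Hence the class number is one, and every finite-adelic component can be normalized to $\hat{b}=1$.

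First I would check that the map is well defined: for $\gamma\in\Gamma_{?}=\widehat{R_{?}}^{\times}\cap B^{1}$ and $z\in\mathcal{H}$, acting by $\gamma\in B^{\times}$ on the left and by $\hat{\gamma}^{-1}\in\widehat{R_{?}}^{\times}$ on the right shows $[\gamma z,1]=[z,1]$, so the assignment descends to $\Gamma_{?}\backslash\mathcal{H}$. For surjectivity, given $[z_{0},\hat{b}]$ with $z_{0}\in\CC-\RR$, class number one yields $\beta\in B^{\times}$ and $\hat{u}\in\widehat{R_{?}}^{\times}$ with $\hat{b}=\beta\hat{u}$, and then $[z_{0},\hat{b}]=[\beta^{-1}z_{0},1]$. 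It remains to move $w:=\beta^{-1}z_{0}$ into the upper half plane, for which I would produce a unit $\gamma\in R_{?}^{\times}$ of reduced norm $-1$: taking $\beta_{0}\in B^{\times}$ with $\Nm(\beta_{0})=-1$ and $\hat{u}_{0}\in\widehat{R_{?}}^{\times}$ with $\Nm(\hat{u}_{0})=-1$, the element $\beta_{0}^{-1}\hat{u}_{0}$ lies in $\widehat{B}^{1}$, so strong approximation supplies $\delta\in B^{1}$ with $\gamma:=\beta_{0}\delta\in R_{?}^{\times}$ and $\Nm(\gamma)=-1$. Such $\gamma$ swaps $\mathcal{H}$ with the lower half plane while fixing the coset of $1$, so replacing $w$ by $\gamma w$ if necessary lands us in $\mathcal{H}$.

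For injectivity, suppose $[z_{1},1]=[z_{2},1]$; then there are $b\in B^{\times}$ and $\hat{u}\in\widehat{R_{?}}^{\times}$ with $bz_{1}=z_{2}$ and $\hat{b}=\hat{u}^{-1}\in\widehat{R_{?}}^{\times}$. The latter forces $\Nm(b)\in\QQ^{\times}\cap\widehat{\ZZ}^{\times}=\{\pm1\}$, and since $b$ carries $z_{1}\in\mathcal{H}$ to $z_{2}\in\mathcal{H}$ its determinant at $\infty$ is positive, so $\Nm(b)=1$; thus $b\in B^{1}\cap\widehat{R_{?}}^{\times}=\Gamma_{?}$ and $z_{1},z_{2}$ lie in one $\Gamma_{?}$-orbit. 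Finally the bijection is bicontinuous and holomorphic (the complex structure on $Y_{?}(\CC)$ is the one transported from $\mathcal{H}$), and one matches the two descriptions of the cusp set when $B=M_{2}(\QQ)$ by the identical reduction applied to $\mathbb{P}^{1}(\QQ)$, so the isomorphism extends to the compactifications $X_{?}(\CC)$.

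I expect the main obstacle to be the surjectivity step, and within it the orientation bookkeeping at $\infty$: because $\CC-\RR$ has two connected components, normalizing $\hat{b}$ to $1$ is not sufficient, and one must genuinely exploit that $\Nm(B^{\times})=\QQ^{\times}$ contains negative rationals — equivalently, produce the reduced-norm $-1$ unit above — in order to guarantee that the representative can always be pushed into $\mathcal{H}$ rather than into the lower half plane. Everything else (well-definedness, the norm computation pinning $\Nm(b)$ to $\pm1$, and the analytic and cusp compatibilities) is routine.
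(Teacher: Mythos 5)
Your proof is correct and follows essentially the same route as the paper: strong approximation for $B^{1}$ together with the computation $\det(\widehat{R_{?}}^{\times})=\widehat{\ZZ}^{\times}$, which shows the relevant class number is one. Your explicit construction of a unit $\gamma\in R_{?}^{\times}$ of reduced norm $-1$ to move representatives into $\mathcal{H}$ is just the unpacked form of the paper's inclusion of the factor $\left\{\pm 1\right\}$ (the components of $\CC-\RR$) in the double coset space $B^{\times}\backslash\left\{\pm 1\right\}\times\widehat{B}^{\times}/\widehat{R_{?}}^{\times}$, so the two arguments coincide in substance.
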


\begin{proof}
The only subtle point is to check that this map is surjective. Since $B$ is an indefinite quaternion algebra, strong approximation \cite[Theorem 4.3]{Vigneras} yields (applying the determinant map) 

\[  B^{\times} \backslash \left\{ \pm 1 \right\} \times   \widehat{B}^{\times}  / \widehat{R_{?}}^{\times} \simeq {\QQ_{>0}}^{\times} \backslash  \widehat{\QQ}^{\times} /\text{det}(\widehat{R_{?}}^{\times}) \simeq  \widehat{\ZZ}^{\times}/\text{det}(\widehat{R_{?}}^{\times}). \]
In addition, as every prime $q$ such that $q^2 \mid N$ is unramified in $K$, it is easy to see that $ \text{det}(\widehat{R_{?}}^{\times})=\widehat{\ZZ}^{\times}$ and thus $ B^{\times} \backslash \left\{\pm 1 \right\} \times   \widehat{B}^{\times}  / \widehat{R_{?}}^{\times}$
consists of a single point, which immediately proves that the map is surjective.
\end{proof}

Let $J_{?}$ be the Jacobian variety of $X_{?}$. We can embed $X_{?} \hookrightarrow_{i_{?}} J_{?}$ using the so-called Hodge class \cite[Section 6.2]{MR1868935},\cite[Section 3.1.3]{MR3237437}, that is, the unique class $\xi_{?} \in Pic(X_{?}) \otimes \QQ$ of degree $1$  
such that $T_{\ell}\xi_{?}=(\ell+1)\xi_{?}$ for every prime number $\ell$ not dividing $N$. In the case $B=M_{2}(\QQ)$ the Hodge class is a (rational) linear combination of cusps. The embedding $i_{?}$ is given by sending a point $P$ to the class of $P-\xi_{?}$.

Let $\pi_{E,?}=\text{Hom}_{\xi_{?}}^{0}(X_{?},E)$, that is, the morphisms in $\text{Hom}(X_{?},E) \otimes_{\ZZ} \QQ$ sending a divisor representing $\xi_{?}$ to zero. In other words,
\[\pi_{E,?}=\text{Hom}(J_{?},E) \otimes_{\ZZ} \QQ.\]  This is the space of  parametrizations from $X_?$ to the elliptic curve $E$. Let $f_E$ be the  newform of level $N$ corresponding to $E$ by the modularity theorem.

\begin{prop}  \leavevmode

\begin{enumerate}
\item The spaces $\pi_{E,ns}$ and $\pi_{E,s}$ are $1$-dimensional.  Non-zero elements $\Phi_{ns} \in \pi_{E,ns}$ and $\Phi_s \in \pi_{E,s}$ 
 have the same eigenvalues as $f_E$ for the \emph{good} (i.e. not dividing $N$) Hecke operators and the  Atkin-Lehner involutions.  The element $\Phi_{E,s}$ factors through $J^{p-\text{new}}_{s}$, since the corresponding form is new at $p$.  

\item Furthermore, if $w_p(E)=+1$ we have the induced non-zero elements $\Phi_{ns+} \in \pi_{E,ns+}$ and  $\Phi_{s+} \in \pi_{E,s+}$ such that the following diagrams are commutative:

\begin{tikzcd}[scale=0.75]
X_{ns} \arrow[r, hook, "i_{ns}"] \arrow[d, twoheadrightarrow]
& J_{ns} \arrow[d, twoheadrightarrow]  \arrow[r, "\Phi_{ns}"] & E   \arrow[d, dash] \\
X_{ns+} \arrow[r, hook, "i_{ns+}"]
& J_{ns+} \arrow[r, "\Phi_{ns+}"] & E,
\end{tikzcd}
 \qquad 
\begin{tikzcd}[scale=0.75]
X_{s} \arrow[r, hook, "i_{s}"] \arrow[d, twoheadrightarrow]
& J^{p-\text{new}}_{s} \arrow[d, twoheadrightarrow]  \arrow[r, "\Phi_{s}"] & E   \arrow[d, dash] \\
X_{s+} \arrow[r, hook,  "i_{s+}"]
& J^{p-\text{new}}_{s+} \arrow[r, "\Phi_{s+}"] & E .
\end{tikzcd}

\end{enumerate}

\end{prop}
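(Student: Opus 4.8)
The plan is to compute the dimensions of $\pi_{E,ns}$ and $\pi_{E,s}$ through the Jacquet--Langlands correspondence together with Eichler--Shimura theory, reducing everything to two local computations at $p$, and then to deduce the factorization statements of part (2) from the action of the Atkin--Lehner involution on the resulting one-dimensional $f_E$-isotypic spaces. Throughout I use that $\pi_{E,?}=\Hom(J_?,E)\otimes\QQ$ is a Hecke-module, so its dimension is the multiplicity of $f_E$.

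\textbf{Dimension count.} First I would invoke Eichler--Shimura and the Jacquet--Langlands correspondence to identify, up to isogeny, the $f_E$-isotypic part of $J_?$ with the $\widehat{R_?}^\times$-fixed vectors in the automorphic representation $\pi$ of $B^\times(\AA)$ attached to $f_E$; this transfer is legitimate because $f_E$ is a discrete series at every finite place of $S-\{\infty\}$, as forced by the sign conditions defining the ramification set of $B$. Under this identification $\dim_\QQ \pi_{E,?}$ factors as $\prod_v \dim \pi_v^{(R_?)_v^\times}$. Away from $p$ the orders $R_{ns}$ and $R_s$ coincide with $R$, so the two products agree there, and each local factor is one-dimensional by the local newvector theory of Casselman and its analogue on the division algebra, using that $R$ has discriminant $M$ and the hypothesis on primes $q$ with $q^2\mid M$. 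Everything therefore reduces to the two local dimensions $\dim \pi_p^{M_s^\times}$ and $\dim \pi_p^{M_{ns}^\times}$, where $\pi_p$ is the local component of $f_E$ at $p$, of conductor exactly $p^2$.

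\textbf{The two local computations at $p$.} For the split Cartan I would note that an explicit conjugation by $\mathrm{diag}(p,1)$ carries $M_s^\times$ onto the standard $\Gamma_0(p^2)$-type compact open of $\GL_2(\ZZ_p)$, whence $\dim \pi_p^{M_s^\times}=\dim \pi_p^{\Gamma_0(p^2)}=1$ by Casselman's newvector theorem; since $\pi_p$ is new at $p$ there are no $p$-old vectors, so $\Phi_s$ automatically factors through $J^{p-\text{new}}_s$. The non-split case $\dim \pi_p^{M_{ns}^\times}=1$ is the heart of the matter, and I expect it to be the main obstacle: this is precisely the comparison of split and non-split Cartan structures at $p$ carried out by Chen and by de Smit--Edixhoven via the representation theory of $\GL_2(\FF_p)$ \cite{Chen1,MR1764312}, which yields the isogeny $J^{\text{new}}_{ns}\sim J^{p-\text{new}}_s$ and hence matching multiplicities. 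I would phrase it representation-theoretically so that it applies uniformly to every admissible $\pi_p$ of conductor $p^2$ (supercuspidal, ramified principal series, or a ramified twist of Steinberg). Once both local dimensions equal $1$, the spaces $\pi_{E,ns},\pi_{E,s}$ are one-dimensional; any nonzero $\Phi_{ns},\Phi_s$ then factor through the $f_E$-isotypic quotient $E$, so by Hecke-equivariance of the whole construction they share the good Hecke eigenvalues and the Atkin--Lehner eigenvalues of $f_E$.

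\textbf{Part (2): descent to the normalizers.} For the second statement I would use that $X_{ns+}$ and $X^{p-\text{new}}_{s+}$ are the quotients of $X_{ns}$ and $X_s$ by the involutions arising from the nontrivial cosets of $C_{ns+}/C_{ns}$ and $C_{s+}/C_s$: the normalizing element acts as Frobenius on $C_{ns}\simeq \FF_{p^2}^\times$, the anti-diagonal element normalizes $C_s$, and in each case the induced involution realizes the Atkin--Lehner operator $w_p$ on the $f_E$-isotypic part. Since $\pi_{E,ns}$ is one-dimensional, this involution acts on it by the scalar $w_p(E)$; thus when $w_p(E)=+1$ the map $\Phi_{ns}$ is invariant under the quotient involution and descends along $J_{ns}\twoheadrightarrow J_{ns+}$ to a nonzero $\Phi_{ns+}$, making the left square commute, and the same argument gives $\Phi_{s+}$ on $J^{p-\text{new}}_{s+}$ for the right square. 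The only delicate points here are the precise identification of each normalizer involution with $w_p$ and the nonvanishing of the descended maps, both of which follow from the one-dimensionality and Hecke-equivariance established in part (1).
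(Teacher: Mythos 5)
Your proposal is correct and follows essentially the same route as the paper, which proves this proposition simply by citing Zhang's Theorem 1.3.1 (together with Gross--Prasad, Proposition 2.6, and Cai--Shu--Tian, Propositions 3.7--3.8); your sketch unfolds exactly the Jacquet--Langlands/Eichler--Shimura multiplicity computation, the Casselman newvector theory at the split Cartan, the Chen/de Smit--Edixhoven comparison at the non-split Cartan, and the descent along the normalizer involutions that underlie those references. The only point stated more lightly than it deserves is the identification of the non-split normalizer involution with $w_p$ on the $f_E$-isotypic line, which requires a local computation (or equivariance of Chen's isogeny) rather than just one-dimensionality and good-Hecke equivariance, but this is precisely part of the content of the cited results.
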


\begin{proof}
This is proved by \cite[Theorem 1.3.1]{MR1868935} (see also \cite[Proposition 2.6]{Gross-Prasad} and \cite[Propositions 3.7,3.8]{Cai-Shu-Tian}).

\end{proof}

\section{Explicit Galois action on special points}
The embedding $\iota: K \rightarrow B$ induces an action of $\widehat{K}^{\times}/K^{\times} \simeq \text{Gal}(K^{\text{ab}}/K)$ on the curves $X_{?}(\CC)$ by the formula

\[ {\mathfrak{a}} \cdot [z,\widehat{b}]:=[z,\hat{\iota}(\mathfrak{a}) \widehat{b}].  \]

Recall that $\omega_f \in \mathcal{O}_K$ satisfies $\mathcal{O}_f= \ZZ + \omega_f \ZZ$.  Let $z_0$ be the unique fixed point in $\mathcal{H}$ of $\iota_{\infty}(\omega_f) \in \text{GL}_{2}(\RR)$ under the action given by  M\"obius transformations. Let  $H_{f}$ (resp. $H_{pf}$) denote the ring class field of conductor $f$ (resp. $pf$), characterized by the fact that $\text{Gal}(H_{f}/K)$ (resp. $\text{Gal}(H_{pf}/K)$)
is identified with $\widehat{K}^{\times}/K^{\times}{\widehat{\mathcal{O}_{f}}}^{\times}$ (resp.
$\widehat{K}^{\times}/K^{\times}{\widehat{\mathcal{O}_{pf}}}^{\times}$).

\begin{prop}  \leavevmode
\begin{itemize}
\item The point $z_0 \in \mathcal{H}$ gives rise to $Q_f=[z_0,1]  \in X_{ns}(H_f)$.

\item The point $z_0 \in \mathcal{H}$ gives rise to $Q_{pf}=[z_0,1] \in X_{s}(H_{pf})$.

\end{itemize}
\end{prop}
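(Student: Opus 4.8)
The plan is to read off the field of definition of each point directly from the adelic description of the Galois action displayed above, since that formula is precisely Shimura's reciprocity law and already encodes the fact that $z_0$ defines an algebraic point on $X_?$ rational over $K^{\text{ab}}$. Thus it suffices, for each curve, to compute the stabilizer of the point inside $\Gal(K^{\text{ab}}/K) \simeq \widehat{K}^\times/K^\times$ and to match it with the subgroup cutting out the asserted ring class field.

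First I would treat $Q_f = [z_0,1]$ on $X_{ns}$. An id\`ele class $\mathfrak{a} \in \widehat{K}^\times$ fixes $Q_f$ if and only if $[z_0, \hat{\iota}(\mathfrak{a})] = [z_0,1]$, i.e. if and only if there exist $\gamma \in B^\times$ and $u \in \widehat{R_{ns}}^\times$ with $\gamma \cdot z_0 = z_0$ as a M\"obius transformation and $\gamma\, \hat{\iota}(\mathfrak{a})\, u = 1$ in $\widehat{B}^\times$. The geometric input is the analysis of the first condition: because $z_0$ is the fixed point of $\iota_\infty(\omega_f)$, a non-scalar element with non-real eigenvalues, an element of $B^\times$ fixes $z_0$ if and only if it centralizes $\iota(\omega_f)$, hence centralizes $\iota(K)$, and since $K$ is a maximal subfield of $B$ this forces $\gamma \in \iota(K^\times)$. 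Writing $\gamma = \iota(\alpha)$ with $\alpha \in K^\times$ and using that the image of $\hat{\iota}$ is commutative, the second condition becomes $u = \hat{\iota}\big((\alpha\mathfrak{a})^{-1}\big)$. Therefore $\mathfrak{a}$ fixes $Q_f$ precisely when $\hat{\iota}\big((\alpha\mathfrak{a})^{-1}\big) \in \widehat{R_{ns}}^\times$ for some $\alpha \in K^\times$.

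Next I would invoke optimality. Proposition \ref{prop:optimal}(1) gives $\iota(K) \cap R_{ns} = \iota(\mathcal{O}_f)$; since optimality may be checked place by place, localizing and passing to units yields $\hat{\iota}(\widehat{K}^\times) \cap \widehat{R_{ns}}^\times = \hat{\iota}(\widehat{\mathcal{O}_f}^\times)$. Consequently the condition above is equivalent to $\alpha\mathfrak{a} \in \widehat{\mathcal{O}_f}^\times$ for some $\alpha \in K^\times$, that is, $\mathfrak{a} \in K^\times \widehat{\mathcal{O}_f}^\times$. The stabilizer of $Q_f$ is thus exactly $K^\times \widehat{\mathcal{O}_f}^\times / K^\times$, which under the reciprocity isomorphism is $\Gal(K^{\text{ab}}/H_f)$; hence $Q_f \in X_{ns}(H_f)$. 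The statement for $Q_{pf} = [z_0,1]$ on $X_s$ is proved verbatim, replacing $R_{ns}$ by $R_s$ and using Proposition \ref{prop:optimal}(2), which gives $\hat{\iota}(\widehat{K}^\times) \cap \widehat{R_s}^\times = \hat{\iota}(\widehat{\mathcal{O}_{pf}}^\times)$ and therefore identifies the stabilizer with $K^\times \widehat{\mathcal{O}_{pf}}^\times / K^\times = \Gal(K^{\text{ab}}/H_{pf})$.

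I expect the only genuinely delicate point to be the justification that the abstract action written above is the actual geometric Galois action on the algebraic curve $X_?$, and that $z_0$ indeed defines a point rational over $K^{\text{ab}}$; this is the content of Shimura's reciprocity law and of the theory of complex multiplication, which I would cite rather than reprove. Everything else reduces to the bookkeeping of the double coset together with the two optimal-embedding identities, both already furnished by Proposition \ref{prop:optimal}. I note that for the bare assertion $Q_f \in X_{ns}(H_f)$ only the inclusion $\hat{\iota}(\widehat{\mathcal{O}_f}^\times) \subseteq \widehat{R_{ns}}^\times$ is needed, while the reverse inclusion coming from full optimality shows moreover that $H_f$ is exactly the field of definition.
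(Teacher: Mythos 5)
Your proposal is correct and takes essentially the same route as the paper: both rest on the adelic Galois action (Shimura reciprocity) together with the optimality statements of Proposition \ref{prop:optimal}, the paper's one-line proof being precisely the inclusion $\hat{\iota}(\widehat{\mathcal{O}_f}^{\times}) \subseteq \widehat{R_{ns}}^{\times}$ (resp.\ $\hat{\iota}(\widehat{\mathcal{O}_{pf}}^{\times}) \subseteq \widehat{R_{s}}^{\times}$) that you yourself observe suffices for the bare assertion. Your full stabilizer computation (the centralizer argument at infinity plus the place-by-place upgrade of optimality to units) establishes the stronger fact that $H_f$, resp.\ $H_{pf}$, is exactly the field of definition, but this is an elaboration of the same argument rather than a genuinely different one.
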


\begin{proof}
Proposition \ref{prop:optimal} tells us that $Q_f \in  X_{ns}(\overline{K})$ (resp. $Q_{pf} \in  X_{s}(\overline{K})$) is fixed under the action of ${\widehat{\mathcal{O}_{f}}}^{\times}$  (resp. ${\widehat{\mathcal{O}_{pf}}}^{\times}$).

\end{proof}

The main task of this section is to understand the action $\text{Gal}(H_{pf}/H_f)$ on $Q_{pf}$.  Looking at the $p$-th component we get

\[\text{Gal}(H_{pf}/H_f) \simeq K^{\times}{\widehat{\mathcal{O}_{f}}}^{\times}/ K^{\times}{\widehat{\mathcal{O}_{pf}}}^{\times} \simeq (\mathcal{O}^{\times}_{f})_p/ (\mathcal{O}^{\times}_{pf})_p.\]
 Reducing modulo $p$ we further obtain

\[(\mathcal{O}^{\times}_{f})_p/ (\mathcal{O}^{\times}_{pf})_p \simeq (\mathcal{O}_{f}/p\mathcal{O}_{f})^{\times} / \mathbb{F}^{\times}_{p} \simeq \mathbb{P}^{1}(\mathbb{F}_{p}). \]

More explicitly, this last isomorphism is given by sending the class of $x_1+ x_2\omega_f$ (with $x_i \in \ZZ_p)$ to the element $[\overline{x_1}:\overline{x_2}] \in \mathbb{P}^{1}(\mathbb{F}_{p})$. The element $[1:0]$ is the identity and  multiplication on $\mathbb{P}^{1}(\mathbb{F}_{p})$
is given by the rule
\[ [x:1][y:1]:=[xy-n: x+y+t], \]
where, as before, $X^2-tX+n$ is characteristic polynomial of  $\iota(\omega_f)_p=\left(\begin{smallmatrix}a&b\\c&d\end{smallmatrix}\right) \in (R_{ns})_p$.

\begin{lemma}\label{lem:elementary}
 The only element of order $2$ in $\mathbb{P}^{1}(\mathbb{F}_{p})$ is $[-\overline{a}:1]$, and 
 in that case the corresponding matrix $\iota(-a+ \omega_f)_p$ belongs to $M^{\times}_{s+} \setminus M^{\times}_s$.

 Conversely, if $\iota(x_1+ x_2 \omega_f)_p \in  M_{s+}$, then
$[\overline{x_1}:\overline{x_2}] \in \left\{[-\overline{a}:1], [1:0] \right\} \subset \mathbb{P}^{1}(\mathbb{F}_{p}).$

\end{lemma}

\begin{proof}
An element $[x:1]$ has order $2$ in $\mathbb{P}^{1}(\mathbb{F}_{p})$ if and only if $2x \equiv -t \bmod{p}$. Because $\left(\begin{smallmatrix}a&b\\c&d\end{smallmatrix}\right) \in (R_{ns})_p$  has trace $t$, then 
$t=a+d  \equiv 2a \bmod{p}$, as we wanted. By the same argument, $\iota(-a+ \omega_f)_p$ has both its diagonal elements congruent to $0$ modulo $p$ (and the anti-diagonal elements not congruent to $0$) so it lies in $M^{\times}_{s+} \setminus M^{\times}_s$. Conversely, suppose that $\iota(x_1+ x_2 \omega_f)_p=\left(\begin{smallmatrix}x_1+x_2a&x_2b\\x_2c&x_1+x_2d\end{smallmatrix}\right) \in   M_{s+}$,
with $x_1,x_2 \in \ZZ_p$. If $x_{2} \equiv 0 \bmod{p}$, then we  obtain the point $[1:0]$. On the other hand, if $x_{2} \not \equiv 0 \bmod{p}$,  as $b,c$ are not divisible by $p$ this implies that both $x_1+x_2a$ and $x_1+x_2d$ are divisible by $p$, and  we get  the point $[-\overline{a}:1]$, as we wanted to prove.

\end{proof}

The local  Atkin-Lehner involution at $p$ of $X_s$ is given by right multiplication by any element in $(R_{s+})^{\times}_p \setminus (R_s)^{\times}_p$. In particular we can take the element $\left(\begin{smallmatrix}0&1\\-1&0\end{smallmatrix}\right)_{p}$.


\begin{prop} \label{prop:signo-}
If $w_p(E)=-1$, then $\text{Tr}_{H_f}^{H_{pf}} \Phi_{s} (Q_{pf})=0.$
\end{prop}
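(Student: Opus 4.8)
The plan is to rewrite the trace as the image under $\Phi_s$ of the full Galois orbit of $Q_{pf}$ and to show that the Atkin--Lehner involution $w_p$ permutes this orbit without fixed points. Since $\Phi_s$ is defined over $\QQ$, it commutes with the Galois action, so that
\[ \text{Tr}_{H_f}^{H_{pf}} \Phi_s(Q_{pf}) = \sum_{\sigma \in \text{Gal}(H_{pf}/H_f)} \Phi_s(\sigma \cdot Q_{pf}) = \Phi_s\!\left( \sum_{\sigma} i_s(\sigma\cdot Q_{pf}) \right), \]
the inner sum being taken in $J_s \otimes \QQ$. Write $G = \text{Gal}(H_{pf}/H_f) \simeq \mathbb{P}^{1}(\mathbb{F}_{p})$, a cyclic group of even order $p+1$, and let $\tau \in G$ be its unique element of order $2$, which by Lemma \ref{lem:elementary} corresponds to $[-\overline{a}:1]$.

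The heart of the argument is the identification $w_p \cdot Q_{pf} = \tau \cdot Q_{pf}$. By Lemma \ref{lem:elementary}, $\tau$ is represented by the idele supported at $p$ whose $p$-component is $\iota(-a+\omega_f)_p \in M_{s+}^{\times} \setminus M_s^{\times}$. The Atkin--Lehner element $w = \left(\begin{smallmatrix}0&1\\-1&0\end{smallmatrix}\right)_{p}$ also lies in $M_{s+}^{\times} \setminus M_s^{\times}$, and since the quotient $M_{s+}^{\times}/M_s^{\times}$ has order $2$, the two elements differ by right multiplication by an element of $M_s^{\times} = (R_s)^{\times}_p$. As both ideles are trivial away from $p$, this shows $\widehat{\iota}(\mathfrak{a}_\tau) \in \widehat{w}\,\widehat{R_s}^{\times}$, whence $\tau \cdot Q_{pf} = w_p \cdot Q_{pf}$.

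Next I would exploit that the Galois action is by left multiplication on $\widehat{B}^{\times}$ whereas $w_p$ is by right multiplication, so the two operators commute. Combined with the previous step this gives, for every $\sigma \in G$,
\[ w_p \cdot (\sigma\cdot Q_{pf}) = \sigma\cdot(w_p\cdot Q_{pf}) = (\sigma\tau)\cdot Q_{pf}. \]
Since $\tau$ has order $2$ and $\tau \neq 1$, the translation $\sigma \mapsto \sigma\tau$ is a fixed-point-free involution of $G$, so $w_p$ permutes the orbit $\{\sigma\cdot Q_{pf}\}_{\sigma\in G}$ in pairs. As $w_p$ preserves degrees and commutes with the good Hecke operators, it fixes the Hodge class $\xi_s$ and hence commutes with $i_s$; therefore the class $\sum_{\sigma} i_s(\sigma\cdot Q_{pf})$ is fixed by $w_p$.

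Finally I would feed this invariance into the sign. The parametrization $\Phi_s$ shares the Atkin--Lehner eigenvalue of $f_E$ at $p$, namely $w_p(E) = -1$, so $\Phi_s\circ w_p = -\Phi_s$. Applying $\Phi_s$ to the $w_p$-fixed class above yields
\[ \Phi_s\!\left( \sum_{\sigma} i_s(\sigma\cdot Q_{pf}) \right) = -\,\Phi_s\!\left( \sum_{\sigma} i_s(\sigma\cdot Q_{pf}) \right), \]
forcing this element to be zero in $E(H_f)\otimes\QQ$; by the first display this is exactly $\text{Tr}_{H_f}^{H_{pf}} \Phi_s(Q_{pf})=0$. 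The only genuinely delicate point is the identification $w_p\cdot Q_{pf} = \tau\cdot Q_{pf}$, where one uses that $p$ is inert (forcing $a\equiv d$ and $p\nmid bc$) via Lemma \ref{lem:elementary}; everything else is formal once the left/right multiplication compatibility is noticed.
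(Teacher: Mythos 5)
Your proof is correct and is essentially the paper's own argument: in both, the key point is that the order-$2$ element $\tau=[-\overline{a}:1]$ of $\text{Gal}(H_{pf}/H_f)$ acts on $Q_{pf}$ exactly as the Atkin--Lehner involution $w_p$, because $\iota(-a+\omega_f)_p$ and $\left(\begin{smallmatrix}0&1\\-1&0\end{smallmatrix}\right)_p$ lie in the same nontrivial coset of $\widehat{R_{s}}^{\times}$ in $\widehat{R_{s+}}^{\times}$ (Lemma \ref{lem:elementary}), after which the Galois orbit pairs off under $\sigma \mapsto \sigma\tau$ and the eigenvalue $w_p(E)=-1$ forces the trace to vanish. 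The paper organizes the cancellation pairwise on points, writing $\Phi_{s}(Q_{pf}^{\mathfrak{b}\mathfrak{b}_0})=-\Phi_{s}(Q_{pf}^{\mathfrak{b}})$ directly, so your detour through $w_p$-invariance of the orbit class, the Hodge class $\xi_s$, and compatibility with $i_s$ is a harmless repackaging of the same computation.
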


\begin{proof}
Let $\mathfrak{b}_{0}$ be the element of $\text{Gal}(H_{pf}/H_f)$ corresponding to $-a+ \omega_f$. We can group the elements of  $\text{Gal}(H_{pf}/H_f)$ into pairs of the form $\left\lbrace \mathfrak{b}, \mathfrak{b}\mathfrak{b}_{0} \right\rbrace$. Consider

\[{Q_{pf}}^{\mathfrak{b}\mathfrak{b}_{0}}=[z_0, \hat{\iota}(\mathfrak{b}) \hat{\iota}(\mathfrak{b}_{0})].\]

By Lemma \ref{lem:elementary} the element $\hat{\iota}(\mathfrak{b}_{0})$ belongs to $ \widehat{R_{s+}}^{\times} \setminus  \widehat{R_{s}}^{\times}$ and thus it can be written as a product $\left(\begin{smallmatrix}0&1\\-1&0\end{smallmatrix}\right)_{p} \cdot \hat{r}_s$, where $\hat{r}_s \in \widehat{R_{s}}^{\times}$.
Therefore we get, in $X_s$,

\[{Q_{pf}}^{\mathfrak{b}\mathfrak{b}_{0}}=[z_0, \hat{\iota}(\mathfrak{b}) \left(\begin{smallmatrix}0&1\\-1&0\end{smallmatrix}\right)_{p} \cdot \hat{r}_s]= [z_0, \hat{\iota}(\mathfrak{b}) \left(\begin{smallmatrix}0&1\\-1&0\end{smallmatrix}\right)_{p}] =w_p {Q_{pf}}^{\mathfrak{b}} .\]

As $w_p(E)=-1$,

\[ \Phi_{s}(Q_{pf}^{\mathfrak{b}\mathfrak{b}_{0}})=\Phi_{s}(w_p(E) {Q_{pf}}^{\mathfrak{b}})= -\Phi_{s}({Q_{pf}}^{\mathfrak{b}}), \]
and the result follows.
\end{proof}

In view of the above proposition, from now on we will study the case where $w_p(E)=+1$. Consider the double coset operator $\Gamma_{s+} \cdot 1 \cdot \Gamma_{ns+}$. It gives rise to a correspondence between the Shimura curves $X_{ns+}$ and $X_{s+}$. The following
result  is the heart of this article.

\begin{prop} \label{prop:main}
The following identity holds on $\text{Div}(X_{s+})$:

\[ 2(\Gamma_{s+} \cdot 1 \cdot \Gamma_{ns+})Q_f = \sum_{\sigma \in \text{Gal}(H_{pf}/H_f)}  {Q_{pf}}^{\sigma}. \]
 
\end{prop}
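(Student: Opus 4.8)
The plan is to reduce everything to the prime $p$, where all the content lives, and to match the two divisors using the group theory of the Cartan subgroups together with Lemma~\ref{lem:elementary}.

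First I would rewrite the double coset operator adelically. Decomposing $\Gamma_{s+}\Gamma_{ns+}=\bigsqcup_i \Gamma_{s+}\gamma_i$ with $\gamma_i\in\Gamma_{ns+}$ representing the left cosets $(\Gamma_{s+}\cap\Gamma_{ns+})\backslash\Gamma_{ns+}$, the operator sends $Q_f=[z_0,1]$ to $\sum_i[\gamma_i z_0,1]_{s+}=\sum_i[z_0,\hat\iota(\gamma_i^{-1})]_{s+}$ (using $[z,\hat b]=[\beta z,\hat\iota(\beta)\hat b]$ with $\beta=\gamma_i^{-1}\in B^\times$). Since $\gamma_i\in B^1$ has components in $R_v^\times$ at every finite $v\neq p$, its away-from-$p$ part is absorbed into $\widehat{R_{s+}}^\times$, so
\[ (\Gamma_{s+}\cdot 1\cdot\Gamma_{ns+})Q_f=\sum_i[z_0,(\gamma_i^{-1})_p]_{s+}, \]
an idele supported at $p$. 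Because the reduction $R^1\to\SL_2(\FF_p)$ is surjective with kernel independent of the local condition at $p$, the reps $\gamma_i$ correspond bijectively to $((C_{s+}\cap C_{ns+})\cap\SL_2(\FF_p))\backslash(C_{ns+}\cap\SL_2(\FF_p))$. A direct count gives $|C_{ns+}\cap\SL_2(\FF_p)|=2(p+1)$ and $|(C_{s+}\cap C_{ns+})\cap\SL_2(\FF_p)|=4$, so there are exactly $(p+1)/2$ terms. This already matches the degree of the right-hand side, since $\#\Gal(H_{pf}/H_f)=\#\mathbb{P}^1(\FF_p)=p+1$, and it explains the factor $2$ by degree reasons.

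Next I would note that a point $[z_0,g]_{s+}$ with $g$ supported at $p$ depends only on the class $\overline g\,C_{s+}\in\GL_2(\FF_p)/C_{s+}$, since right multiplication by $(R_{s+})_p^\times$ (the full preimage of $C_{s+}$, including the congruence kernel) leaves the point unchanged. The Galois conjugate $Q_{pf}^\sigma$ corresponds to $g=\iota(x_1+x_2\omega_f)_p$, whose reduction lies in $C_{ns}$, so on $X_{s+}$ the conjugates are governed by $C_{ns}C_{s+}/C_{s+}\cong C_{ns}/(C_{ns}\cap C_{s+})$. By Lemma~\ref{lem:elementary} the element $\mathfrak{b}_0$ acts as $w_p$, which is trivial on $X_{s+}$; since $C_{ns}\cap C_{s+}$ reduces modulo scalars to the order-two subgroup $\langle\mathfrak{b}_0\rangle$, the $p+1$ conjugates collapse in pairs onto $(p+1)/2$ points, each hit twice. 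Thus the right-hand side equals $2\sum_{c}[z_0,c]_{s+}$, the sum running over the $(p+1)/2$ classes $c\in C_{ns}/(C_{ns}\cap C_{s+})$.

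For the left-hand side the crucial group-theoretic input is that $C_{ns+}\subseteq C_{ns}\cdot C_{s+}$: the non-trivial coset of $C_{ns+}/C_{ns}$ is represented by $\left(\begin{smallmatrix}1&0\\0&-1\end{smallmatrix}\right)$, which is diagonal and hence lies in $C_{s+}$, so every element of $C_{ns+}$ factors as (element of $C_{ns}$) $\cdot$ (element of $C_{s+}$). Hence each double-coset point $[z_0,(\gamma_i^{-1})_p]_{s+}$ equals $[z_0,u_i]_{s+}$ for a well-defined $u_i\in C_{ns}/(C_{ns}\cap C_{s+})$, and I would check that the resulting map $((C_{s+}\cap C_{ns+})\cap\SL_2(\FF_p))\backslash(C_{ns+}\cap\SL_2(\FF_p))\to C_{ns}/(C_{ns}\cap C_{s+})$ is a bijection: both sides have cardinality $(p+1)/2$, and injectivity is an elementary manipulation showing that equality of images forces the two representatives into the same left coset. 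Consequently the left-hand side is $2\sum_c[z_0,c]_{s+}$, identical to the right-hand side.

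I expect the main obstacle to be the bookkeeping in the first paragraph: passing correctly from the classical double coset operator to the adelic sum (tracking the inverse and left-versus-right cosets), and in particular verifying that the relevant reductions are $\SL_2(\FF_p)$-classes rather than $\GL_2(\FF_p)$-classes. It is exactly this $\SL_2$-versus-$\GL_2$ determinant bookkeeping that produces the index $(p+1)/2$ rather than $p+1$ and thereby forces the factor $2$ in the statement; the remaining matching is then the clean finite computation $C_{ns+}\subseteq C_{ns}C_{s+}$ together with the bijection above.
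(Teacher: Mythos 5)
Your argument is correct, and although it draws on the same circle of ideas as the paper (strong approximation, the mod-$p$ Cartan groups, Lemma~\ref{lem:elementary}), it is organized in a genuinely different way. The paper works pointwise from the Galois side: each idele $\mathfrak{r}_i=\widehat{\iota}(\mathfrak{b}_i)$ is factored as $\gamma_i r_i$ with $\gamma_i\in\Gamma_{ns+}$ and $r_i\in\widehat{R_{s+}}^{\times}$, via a case analysis on whether $\det(\mathfrak{r}_i)_p$ is a square modulo $p$ (multiplying by $\left(\begin{smallmatrix}\mu^{-1}&0\\0&\mu^{-1}\end{smallmatrix}\right)$ or $\left(\begin{smallmatrix}0&\mu^{-1}\\-\varepsilon\mu^{-1}&0\end{smallmatrix}\right)\in C_{ns+}\cap C_{s+}$ and lifting by strong approximation); this exhibits each conjugate ${Q_{pf}}^{\mathfrak{b}_i}$ directly as a point $[\gamma_i^{-1}z_0,1]$ of the double-coset divisor, and the multiplicity two is then verified by applying Lemma~\ref{lem:elementary} to ${\mathfrak{r}_i}^{-1}\mathfrak{r}_j$, since $\Gamma_{s+}\gamma_i^{-1}=\Gamma_{s+}\gamma_j^{-1}$ if and only if ${\mathfrak{r}_i}^{-1}\mathfrak{r}_j\in\widehat{R_{s+}}^{\times}$. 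You instead compute the two divisors independently as sums over $C_{ns}/(C_{ns}\cap C_{s+})$: the left side via reduction of the coset representatives and your factorization $C_{ns+}\subseteq C_{ns}\cdot C_{s+}$ (the $\left(\begin{smallmatrix}1&0\\0&-1\end{smallmatrix}\right)$ observation, which has no counterpart in the paper), the right side via the two-to-one collapse coming from $\mathfrak{b}_0$ acting through $w_p$, trivial on $X_{s+}$ --- the same use of Lemma~\ref{lem:elementary} that drives Proposition~\ref{prop:signo-} --- and you then match the two by an injectivity-plus-cardinality bijection. What each buys: the paper's route is constructive per Galois element and makes the determinant dichotomy (hence the necessity of working with normalizers) explicit, whereas your route isolates the finite group theory cleanly and makes the factor $2$ and the degree $(p+1)/2$ transparent from the counts $|C_{ns+}\cap\SL_2(\FF_p)|=2(p+1)$ and $|(C_{s+}\cap C_{ns+})\cap\SL_2(\FF_p)|=4$, at the price of a non-constructive matching and more adelic bookkeeping up front. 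Two cosmetic slips, neither affecting correctness: $\widehat{\iota}$ is reserved in the paper for the adelization of $\iota\colon K\to B$, so you should write the diagonal image of $\gamma_i^{-1}$ in $\widehat{B}^{\times}$ rather than $\widehat{\iota}(\gamma_i^{-1})$; and when passing from representatives of $(\Gamma_{s+}\cap\Gamma_{ns+})\backslash\Gamma_{ns+}$ to their inverses you should record that inversion exchanges left and right cosets --- exactly the bookkeeping you flagged as the main obstacle, and it does go through.
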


\begin{proof}
 Take a set of representatives $\left\lbrace \mathfrak{b}_{i} \right\rbrace$ of $\text{Gal}(H_{pf}/H_f)$ in ${\widehat{\mathcal{O}_{f}}}^{\times}$. The element $\mathfrak{r}_{i}=\widehat{\iota}(\mathfrak{b}_{i})$ belongs to $\widehat{\iota}({\widehat{\mathcal{O}_{f}}}^{\times})$ and, therefore, it also belongs to
$ \widehat{R_{ns}}^{\times}$ by Proposition \ref{prop:optimal}. We want to write $\mathfrak{r}_{i}$ as a product 
 $\gamma_{i}r_{i}$  where $r_{i} \in  \widehat{R_{s+}}^{\times}$ and
 $\gamma_{i} \in  \Gamma_{ns+}$. In order to do that, we need to modify the component at $p$. Suppose that $(\mathfrak{r}_{i})_p$ has determinant $d \in {\ZZ_p}^{\times}$.
 
 \begin{itemize}
  \item If $\overline{d}=\mu^{2}$, consider $\overline{(\mathfrak{r}_{i})_p} \left(\begin{smallmatrix} \mu^{-1} & 0\\ 0 &  \mu^{-1} \end{smallmatrix}\right) \in \text{SL}_{2}(\mathbb{F}_p)$. By strong approximation, we take a lifting of this element to an element $\gamma_{i} \in R^{1}$. Since $ \left(\begin{smallmatrix} \mu^{-1} & 0\\ 0 &  \mu^{-1} \end{smallmatrix}\right) \in C_{ns+} \cap C_{s+}$,
we get the decomposition
  \[ \mathfrak{r}_{i}= \gamma_{i} \cdot ((\gamma_{i})^{-1}\mathfrak{r}_{i}), \]
that has the desired properties.
  \item If $\overline{d}=\varepsilon\mu^{2}$, consider $\overline{(\mathfrak{r}_{i})_p} \left(\begin{smallmatrix} 0 & \mu^{-1}\\ -\varepsilon \mu^{-1} & 0 \end{smallmatrix}\right) \in \text{SL}_{2}(\mathbb{F}_p)$ 
and do the same as the previous case, using that $\left(\begin{smallmatrix} 0 & \mu^{-1}\\ -\varepsilon \mu^{-1} & 0 \end{smallmatrix}\right)   \in C_{ns+} \cap C_{s+}$.  It is worth noticing that this case  shows the necessity of working with
  the normalizers, in concordance with Proposition \ref{prop:signo-}.
 \end{itemize}

By construction we get
 
 \[  {Q_{pf}}^{\mathfrak{b}_{i}}= [z_0,1]^{\mathfrak{b}_{i}}=[z_0,\mathfrak{r}_{i}]=[z_0, \gamma_{i}r_{i}]=[{\gamma_{i}}^{-1}\cdot z_0,1]  \in X_{s+}(H_{pf}), \]
 
 which corresponds to the point ${\gamma_{i}}^{-1} \cdot z_0$ in the upper half plane of the model  $\Gamma_{s+} \backslash \mathcal{H}^{*}=X_{s+}$. 
 
Note that the correspondence $\Gamma_{s+} \cdot 1 \cdot \Gamma_{ns+}$ is of degree 
\[ [\Gamma_{ns+}: \Gamma_{ns+} \cap \Gamma_{s+}]= [C_{ns+}: C_{ns+} \cap C_{s+}]= \frac{2(p+1)}{4}=\frac{(p+1)}{2}. \]

Since $\gamma_i \in \Gamma_{ns+}$, as we vary over all $p+1$ elements of  $\text{Gal}(H_{pf}/H_f)$ we obtain the corresponding coset $\Gamma_{s+} {\gamma_{i}}^{-1} \subset \Gamma_{s+} \cdot 1 \cdot \Gamma_{ns+}$. The cosets $\Gamma_{s+} {\gamma_{i}}^{-1}$  and $\Gamma_{s+} {\gamma_{j}}^{-1}$ are equal if and only if

\[ {\gamma_{i}}^{-1} \gamma_{j} \in \Gamma_{s+}. \]

By the definition of $\gamma_i$ and $\gamma_j$ we obtain

\[ {\mathfrak{r}_{i}}^{-1} \mathfrak{r}_{j}= {r_{i}}^{-1} {\gamma_{i}}^{-1} \gamma_j r_j. \]

Since $r_i, r_j \in \widehat{R_{s+}}^{\times}$ we know that 

\[ {\gamma_{i}}^{-1} \gamma_{j} \in \Gamma_{s+}  \iff {\mathfrak{r}_{i}}^{-1} \mathfrak{r}_{j} \in \widehat{R_{s+}}^{\times} .\]

Using the second part of Lemma \ref{lem:elementary} we see that ${\mathfrak{r}_{i}}^{-1} \mathfrak{r}_{j} \in \widehat{R_{s+}}^{\times}$ if and only if the element ${\mathfrak{r}_{i}}^{-1} \mathfrak{r}_{j}$
corresponds to either $ [1:0]$ or $[-\overline{a}:1]$ in $\mathbb{P}^{1}(\mathbb{F}_{p})$.

Consequently, as we vary $i$, we run through every element of
$(\Gamma_{s+} \cdot 1 \cdot \Gamma_{ns+}) Q_f$ exactly twice, as we wanted to prove.
 
\end{proof}

\section{Chen's explicit isogeny}
Chen \cite[Theorem 2]{Chen2} proved that an explicit isogeny between $J_{ns+}(p)$ and $J^{p- \text{new}}_{s+}(p)$ is given by the double coset operator $\Gamma_{s+}(p) \cdot 1 \cdot \Gamma_{ns+}(p)$.  In fact, the nature of the proof shows that it extends to an isogeny $\Gamma_{s+} \cdot 1 \cdot \Gamma_{ns+}$  between
$J_{ns+}$ and $J^{p- \text{new}}_{s+}$. 

\begin{prop} \label{prop:equivariant}
 The map $\Gamma_{s+} \cdot 1 \cdot \Gamma_{ns+}$ is Hecke equivariant for the good Hecke operators.
\end{prop}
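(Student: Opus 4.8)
The plan is to exploit that Chen's correspondence $\Gamma_{s+}\cdot 1\cdot\Gamma_{ns+}$ is supported at the prime $p$, while the good Hecke operators $T_\ell$ are supported at primes $\ell\nmid N$, and that $\ell\neq p$ (since $p^2\mid N$); operators living at disjoint places commute. I will make this transparent in the adelic model. Write $U_{ns+}=\widehat{R_{ns+}}^{\times}$, $U_{s+}=\widehat{R_{s+}}^{\times}$ and $U=U_{ns+}\cap U_{s+}$, with associated Shimura curve
\[ X_{U}(\CC)=B^{\times}\backslash (\CC-\RR)\times \widehat{B}^{\times}/U. \]
Because $R_{ns+}$ and $R_{s+}$ are the suborders of $R$ cut out by the local conditions $M_{ns+}$ and $M_{s+}$ at $p$ and coincide with $R$ at every other place, the three level groups $U_{ns+}$, $U_{s+}$ and $U$ have identical local components away from $p$; in particular at a good prime $\ell\nmid N$ they share the maximal component $\GL_{2}(\ZZ_\ell)$. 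The operator $\Gamma_{s+}\cdot 1\cdot\Gamma_{ns+}$ factors through this common refinement as $(\pi_{s+})_{*}\circ(\pi_{ns+})^{*}$, where $\pi_{ns+}\colon X_{U}\to X_{ns+}$ and $\pi_{s+}\colon X_{U}\to X_{s+}$ are the natural projections; on double cosets both maps modify only the component at $p$, being given by right multiplication by (and summation over) finitely many representatives that can be chosen supported at $p$ and trivial at every other place.

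Next I would recall the adelic shape of $T_\ell$ for $\ell\nmid N$. There $R$ is maximal at $\ell$ and $B_\ell\simeq M_{2}(\QQ_\ell)$, so $T_\ell$ is right multiplication by right coset representatives $\{\alpha_{j}\}_{j=0}^{\ell}$ of $\GL_{2}(\ZZ_\ell)\left(\begin{smallmatrix}\ell & 0 \\ 0 & 1\end{smallmatrix}\right)\GL_{2}(\ZZ_\ell)$ modulo $\GL_{2}(\ZZ_\ell)$, embedded at the place $\ell$ and trivial elsewhere, so that $T_\ell[z,\widehat{b}]=\sum_{j}[z,\widehat{b}\,\alpha_{j}]$. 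Since the local component at $\ell$ is the same maximal group for $U_{ns+}$, $U_{s+}$ and $U$, the very same representatives $\{\alpha_{j}\}$ compute $T_\ell$ simultaneously on $X_{ns+}$, $X_{s+}$ and $X_{U}$; denote by $T_\ell^{U}$ the resulting operator on $X_{U}$.

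The commutation then follows formally from the disjointness of supports. Writing $(\pi_{ns+})^{*}[z,\widehat{b}]=\sum_{k}[z,\widehat{b}\,\eta_{k}]_{U}$ with the $\eta_{k}$ supported at $p$ (and similarly for $(\pi_{s+})_{*}$), the identity $\widehat{b}\,\alpha_{j}\,\eta_{k}=\widehat{b}\,\eta_{k}\,\alpha_{j}$ holds because $\alpha_{j}$ is supported at $\ell$, $\eta_{k}$ at $p$, and $\widehat{B}^{\times}$ is the restricted product of the local groups $B_{v}^{\times}$; moreover right multiplication by $\alpha_{j}$ does not alter the family $\{\eta_{k}\}$ of representatives needed at $p$, nor conversely. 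Hence $(\pi_{ns+})^{*}\circ T_\ell=T_\ell^{U}\circ(\pi_{ns+})^{*}$ and $(\pi_{s+})_{*}\circ T_\ell^{U}=T_\ell\circ(\pi_{s+})_{*}$, and composing yields
\[ \left(\Gamma_{s+}\cdot 1\cdot\Gamma_{ns+}\right)\circ T_\ell=T_\ell\circ\left(\Gamma_{s+}\cdot 1\cdot\Gamma_{ns+}\right) \]
as maps $\text{Div}(X_{ns+})\to\text{Div}(X_{s+})$, which descends to the isogeny $J_{ns+}\to J^{p-\text{new}}_{s+}$.

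The only real point requiring care is the bookkeeping behind the last paragraph: one must check that the double‑coset decomposition at $p$ defining $(\pi_{ns+})^{*}$ and $(\pi_{s+})_{*}$ and the decomposition at $\ell$ defining $T_\ell$ can be carried out simultaneously and independently, i.e. that the two families of representatives may be taken supported at $p$ and at $\ell$ respectively. This is precisely the commutation of the local Hecke algebras at the distinct places $\ell$ and $p$ inside the adelic Hecke algebra, which holds because the level groups factor as products over places and $\widehat{B}^{\times}$ is a restricted product. In particular, none of the cancellation or normalizer phenomena at $p$ encountered in Proposition \ref{prop:signo-} and Proposition \ref{prop:main} intervene here, exactly because $\ell\neq p$.
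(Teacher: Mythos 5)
Your central commutation step is sound: adelically, $T_\ell$ is right multiplication by coset representatives supported at $\ell$, the correspondence through $X_U$ (with $U=\widehat{R_{s+}}^{\times}\cap\widehat{R_{ns+}}^{\times}$) is right multiplication by representatives supported at $p$, and elements supported at distinct places commute in the restricted product $\widehat{B}^{\times}$. This is also a genuinely different implementation from the paper's proof, which stays with double cosets: there one corrects an element of reduced norm $\ell$ at the place $p$, by the same strong-approximation device as in Proposition \ref{prop:main}, to obtain a single $\alpha_{\ell}\in \widehat{R}^{\times}_{s+}\cap\widehat{R}^{\times}_{ns+}$ representing $T_\ell$ simultaneously on both sides, and then applies Shimura's lemma on products of double cosets to get $(\Gamma_{s+}\cdot 1\cdot\Gamma_{ns+})(\Gamma_{ns+}\cdot\alpha_{\ell}\cdot\Gamma_{ns+})=\Gamma_{s+}\cdot\alpha_{\ell}\cdot\Gamma_{ns+}=(\Gamma_{s+}\cdot\alpha_{\ell}\cdot\Gamma_{s+})(\Gamma_{s+}\cdot 1\cdot\Gamma_{ns+})$. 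Your route replaces both of those ingredients by the formal disjoint-support commutation.

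There is, however, one genuine gap, and it sits exactly at the step you treat as transparent at the outset: the identification of the classical operator $\Gamma_{s+}\cdot 1\cdot\Gamma_{ns+}$ with the adelic correspondence $(\pi_{s+})_{*}\circ(\pi_{ns+})^{*}$ through $X_U$. The classical double coset operator lives on $\Gamma_{?}\backslash\mathcal{H}^{*}$ and only sees the identity component of $X_U$, so the identification requires $X_U$ to be connected, i.e. $\det(U)=\widehat{\ZZ}^{\times}$, which at $p$ amounts to $\det(C_{ns+}\cap C_{s+})=\mathbb{F}^{\times}_{p}$. This is true — the intersection contains the scalars, the matrices $\left(\begin{smallmatrix}a&0\\0&-a\end{smallmatrix}\right)$ and the antidiagonal matrices $\left(\begin{smallmatrix}0&b\\ \pm b\varepsilon&0\end{smallmatrix}\right)$, with determinants $a^{2}$, $-a^{2}$, $\mp\varepsilon b^{2}$, which exhaust $\mathbb{F}^{\times}_{p}$ — and then the degrees match: $[C_{ns+}:C_{ns+}\cap C_{s+}]=(p+1)/2=[\Gamma_{ns+}:\Gamma_{ns+}\cap\Gamma_{s+}]$. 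But none of this is a formality, and your closing claim that ``none of the normalizer phenomena at $p$ intervene'' is exactly backwards: the normalizers are what make your unproved identification true. For the non-normalizer groups, $C_{ns}\cap C_{s}$ consists only of scalars, whose determinants are just the squares; the intermediate adelic curve has two components, the adelic correspondence has degree $p+1$ while $\Gamma_{s}\cdot 1\cdot\Gamma_{ns}$ has degree $(p+1)/2$, and the two operators genuinely differ. Since your argument nowhere uses the normalizers, as written it would equally ``prove'' that $\Gamma_{s}\cdot 1\cdot\Gamma_{ns}$ commutes with all good $T_\ell$, in tension with the Remark following this proposition in the paper (where only $\ell$ a square modulo $p$ can be handled). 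Once you insert the determinant computation above — the exact analogue of the connectedness argument for $X_{?}$ in Section 2 — your proof is complete.
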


\begin{proof}
 Let $\ell$ be a prime that does not divide $N$. Since the reduced norm gives a surjective map  $\widehat{R}^{\times}_s \rightarrow \widehat{\ZZ}^{\times}$ we choose $\beta_{\ell} \in \widehat{R}^{\times}_s$ of reduced norm $\ell$. We perform the same construction as in Proposition \ref{prop:main} but for $\beta_{\ell}$ instead of
 $\mathfrak{r}_{i}$ and we obtain the same matrix $\gamma$ (there denoted by $\gamma_{i}$). This matrix has the property that $\alpha_{\ell}:=\gamma^{-1}\beta_{\ell}$ belongs to $\widehat{R}^{\times}_{s+} \cap \widehat{R}^{\times}_{ns+}$ and has reduced norm equal to $\ell$. Thus the $\ell$-th Hecke operator in the split side
 (resp. non-split side) is given by $\Gamma_{s+} \cdot \alpha_{\ell} \cdot \Gamma_{s+}$ (resp. $\Gamma_{ns+} \cdot \alpha_{\ell} \cdot \Gamma_{ns+}$). Using \cite[Lemma 3.29 (4)]{MR0314766} it is clear that 
 
 \[(\Gamma_{s+} \cdot 1 \cdot \Gamma_{ns+}) \cdot  (\Gamma_{ns+} \cdot \alpha_{\ell} \cdot \Gamma_{ns+})= \Gamma_{s+} \cdot \alpha_{\ell} \cdot \Gamma_{ns+} = (\Gamma_{s+} \cdot \alpha_{\ell} \cdot \Gamma_{s+}) \cdot (\Gamma_{s+} \cdot 1 \cdot \Gamma_{ns+}), \]
 and thus $\Gamma_{s+} \cdot 1 \cdot \Gamma_{ns+}$ commutes with the good Hecke operators as desired.

\end{proof}

\begin{remark}
 This proof is also in concordance with Proposition \ref{prop:signo-} and with the fact that the natural map $\Gamma_{s} \cdot 1 \cdot \Gamma_{ns}$ does not always give an isogeny between $J_{ns}$ and $J^{p- \text{new}}_{s}$. We can only prove that it commutes with the Hecke operators $T_{\ell}$ with $\ell$ a
 square modulo $p$, and given a newform $f$ of level divisible by $p^2$, the space of forms with the same eigenvalues as $f$ for these Hecke operators has (usually) dimension $2$. 
\end{remark}

As the correspondence $\Gamma=2(\Gamma_{s+} \cdot 1 \cdot \Gamma_{ns+})$ is Hecke equivariant and of degree $p+1$ it sends the Hodge class $\xi_{ns+}$ to $(p+1)\xi_{s+}$. Therefore, we can 
define 

\begin{tikzcd}[scale=0.75]
\widetilde{\Phi}_{ns+}= \Phi_{s+} \circ \Gamma  \in \pi_{E,ns+},
\end{tikzcd}
\qquad
\begin{tikzcd}[scale=0.5]
J_{ns+} \arrow[r, " \Gamma"] \arrow[dr, bend right,  " \widetilde{\Phi}_{ns+}"] 
& J^{p-\text{new}}_{s+} \arrow[d, "\Phi_{s+}"]\\
& E.
\end{tikzcd}

\begin{thm} \label{prop:main2}
 Let $E/\QQ$ be an elliptic curve of conductor $N=p^2M$ where $p$ is an odd prime not dividing $M$ and such that $w_p(E)=+1$.
Let $K$ be an imaginary quadratic field such that $p$ is inert in $K$, the sign of $E/K$ is $-1$ and such that if $q$ is prime with $q^2 \mid M$, then $q$ is unramified in $K$. 
Let $f$ be relatively prime to $N$ and consider the points $P_{pf}= \Phi_{s+}(Q_{pf}) \in E(H_{pf})$, $P_{f}=  \Phi_{ns+}(Q_{f}) \in E(H_{f})$ and $\tilde{P}_{f}=  \widetilde{\Phi}_{ns+}(Q_{f}) \in E(H_{f})$.
 Then, $\text{Tr}_{H_f}^{H_{pf}} P_{pf}= \tilde{P}_{f}   \in E(H_{f})$ and $\tilde{P}_{f}$ is non-torsion if and only if ${P}_{f}$ is non-torsion.
\end{thm}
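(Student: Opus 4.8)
The plan is to separate the statement into the trace identity $\text{Tr}_{H_f}^{H_{pf}} P_{pf} = \tilde P_f$, which I treat as a computation in $J^{p-\text{new}}_{s+}$, and the non-triviality equivalence, which I deduce from a one-dimensionality argument. For the trace identity, I would first invoke that the parametrization $\Phi_{s+}$ is defined over $\QQ$ and hence commutes with the Galois action, so that
\[\text{Tr}_{H_f}^{H_{pf}} P_{pf} = \sum_{\sigma \in \text{Gal}(H_{pf}/H_f)} \Phi_{s+}(Q_{pf})^{\sigma} = \sum_{\sigma} \Phi_{s+}(i_{s+}(Q_{pf}^{\sigma})).\]
Unwinding the embedding $i_{s+}$ through the Hodge class and using that $\text{Gal}(H_{pf}/H_f) \simeq \mathbb{P}^{1}(\mathbb{F}_p)$ has $p+1$ elements, the relevant class in $J^{p-\text{new}}_{s+}$ is $\sum_\sigma i_{s+}(Q_{pf}^\sigma) = \bigl(\sum_\sigma Q_{pf}^\sigma\bigr) - (p+1)\xi_{s+}$. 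Now Proposition \ref{prop:main} identifies $\sum_\sigma Q_{pf}^\sigma$ with $\Gamma Q_f$ (where $\Gamma = 2(\Gamma_{s+} \cdot 1 \cdot \Gamma_{ns+})$), while the degree-$(p+1)$ Hecke-equivariant correspondence $\Gamma$ sends $\xi_{ns+}$ to $(p+1)\xi_{s+}$; the Hodge terms therefore match exactly and
\[\sum_\sigma i_{s+}(Q_{pf}^\sigma) = \Gamma Q_f - (p+1)\xi_{s+} = \Gamma(Q_f - \xi_{ns+}) = \Gamma(i_{ns+}(Q_f)).\]
Applying $\Phi_{s+}$ then yields $\text{Tr}_{H_f}^{H_{pf}} P_{pf} = (\Phi_{s+} \circ \Gamma)(i_{ns+}(Q_f)) = \widetilde{\Phi}_{ns+}(Q_f) = \tilde P_f$.

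For the equivalence, I would show that $\widetilde{\Phi}_{ns+}$ and $\Phi_{ns+}$ are proportional in $\pi_{E,ns+}$. Since $X_{ns+}$ is a quotient of $X_{ns}$, precomposition with the surjection $J_{ns} \twoheadrightarrow J_{ns+}$ embeds $\pi_{E,ns+}$ into the one-dimensional space $\pi_{E,ns}$, and $\pi_{E,ns+}$ is nonzero because $\Phi_{ns+}$ exists; hence $\pi_{E,ns+}$ is one-dimensional. As $\Gamma$ is an isogeny (Chen) and $\Phi_{s+} \neq 0$, the element $\widetilde{\Phi}_{ns+} = \Phi_{s+} \circ \Gamma$ is nonzero, so $\widetilde{\Phi}_{ns+} = \lambda \Phi_{ns+}$ for some $\lambda \in \QQ^{\times}$. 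Writing $\lambda = a/b$ with $a,b \in \ZZ \setminus \{0\}$ and clearing denominators produces an identity of honest morphisms $b\,\widetilde{\Phi}_{ns+} = a\,\Phi_{ns+}$, hence $b\,\tilde P_f = a\,P_f$ in $E(H_f)$; since $a$ and $b$ are nonzero, $\tilde P_f$ is non-torsion if and only if $P_f$ is.

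\textbf{Main obstacle.} The substantive input is entirely contained in Proposition \ref{prop:main} and the Hecke-equivariance of $\Gamma$ (Proposition \ref{prop:equivariant}), both already available, so the remaining work is bookkeeping: matching the $(p+1)$ copies of the Hodge class $\xi_{s+}$ on the two sides of the trace computation, and making the passage from the rational proportionality $\widetilde{\Phi}_{ns+} = \lambda \Phi_{ns+}$ to an integral relation between the actual points careful enough to transfer the non-torsion property.
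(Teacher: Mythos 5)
Your proposal is correct and follows essentially the same route as the paper: the trace identity is deduced from Proposition \ref{prop:main} together with the fact (stated in the paper just before the theorem) that the degree-$(p+1)$ Hecke-equivariant correspondence $\Gamma$ sends $\xi_{ns+}$ to $(p+1)\xi_{s+}$, and the non-torsion equivalence comes from $\Phi_{ns+}$ and $\widetilde{\Phi}_{ns+}$ being non-zero elements of the $1$-dimensional space $\pi_{E,ns+}$. Your write-up merely makes explicit two points the paper compresses --- the Hodge-class bookkeeping in the trace computation and the injection $\pi_{E,ns+} \hookrightarrow \pi_{E,ns}$ justifying one-dimensionality --- which is sound but not a different argument.
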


\begin{proof}
First, note that $\text{Tr}_{H_f}^{H_{pf}} P_{pf}= \tilde{P}_{f}$ follows immediately from Proposition \ref{prop:main}. The second statement follows from the fact that the elements $\Phi_{ns+}, \widetilde{\Phi}_{ns+}$ lie in the $1$-dimensional space $\pi_{E,ns+}=\text{Hom}(J_{ns+},E) \otimes_{\ZZ} \QQ$ and $\widetilde{\Phi}_{ns+}$
is non-zero since it is the composition of the non-trivial ${\Phi}_{s+}$ and the isogeny  $\Gamma$.
\end{proof}

\begin{remark}
 For $f=1$, $P_1$ is non-torsion if $L^{'}(E/K,1) \neq 0$ by Zhang's generalization of the Gross-Zagier formula \cite[Theorem 1.2.1]{MR1868935}.
\end{remark}


\bibliographystyle{alpha}
\bibliography{bibliography}
\end{document}